\newcounter{parentnumber}
\newtheorem{theorem}{Theorem}
\newtheorem{lemma}{Lemma}
\newtheorem{definition}{Definition}
\newtheorem{remark}{Remark}
\newtheorem{assumption}{Assumption}
\title{Continuous Optimization-Based Drift Counteraction Optimal Control: A Spacecraft Attitude Control Case Study}
\author{Sunbochen Tang \footnote{Graduate student, Department of Aerospace Engineering.}, Nan Li\footnote{Ph.D., Department of Aerospace Engineering.}, Robert A.E. Zidek\footnote{Ph.D., Department of Aerospace Engineering}, and Ilya Kolmanovsky\footnote{Professor, Department of Aerospace Engineering.}}
\affil{University of Michigan, Ann Arbor, Michigan 48109}
\begin{document}

\maketitle

\section*{Nomenclature}
{\renewcommand\arraystretch{1.0}
\noindent\begin{longtable*}{@{}l @{\quad=\quad} l@{}}
    $\mathcal{B}$ & body-fixed frame \\
    $C_{diff}$ & diffusion coefficient \\
    $\bar{g}$ & reaction wheel spin axis unit vector resolved in $\mathcal{B}$ frame \\
    $\mathcal{I}$ & inertial frame \\
    $J$ & inertia matrix of the cuboid bus in $\mathcal{B}$ frame \\
    $J_w$ & moment of inertia of reaction wheel about spin axis \\
    $l_i$ & distance between spacecraft's center of mass and geometric center; $i \in \{x, y, z\}$, m \\
    $L_i$ & dimension of cuboid bus; $i \in \{x, y, z\}$, m \\
    $p$ & number of operational reaction wheels \\
    $\Delta t$ & sampling period \\
    $u$ & control input vector \\
    $\hat{u}_S$ & unit vector in the direction from the spacecraft to the Sun \\
    $U$ & control admissible set \\
    $W$ & matrix containing reaction wheel spin axes as column vectors \\
    $x$ & state vector \\
    $X$ & prescribed set defined by state constraints \\
    $\kappa$ & time-before-exit \\
    $\lambda$ & Lagrange multiplier \\
    $\phi$ & Euler angle (roll), rad \\
    $\theta$ & Euler angle (pitch), rad \\
    $\psi$ & Euler angle (yaw), rad \\
    $\bar{\nu}$ & reaction wheel spin rate vector, rad/s\\
    $\bar{\omega}$ & angular velocity vector resolved in $\mathcal{B}$ frame, rad/s \\
    $\vec{\tau}_{srp}$ & solar radiation pressure torque vector, N$\cdot$m \\
    $\Phi_s$ & solar flux, W/m$^2$ \\
    \multicolumn{2}{@{}l}{Subscripts} \\
    E & Earth \\
    $k$ & time instant \\
    S & Sun \\
    SC & spacecraft \\
    srp	& solar radiation pressure
\end{longtable*}}

\section{Introduction}\label{sec:1}

Constraints are ubiquitous and crucial to the safe and efficient operation of many engineered systems. A variety of control strategies have been developed to manage constraints, such as model predictive control (MPC) \cite{camacho2013model}, reference governors \cite{garone2017reference}, and control barrier function \cite{ames2016control}. In certain scenarios, all trajectories of system state under admissible control inputs will eventually drift out of a desired operating region defined by a set of constraints, which makes constraint violation in finite time inevitable. It is not uncommon to observe such behaviors in systems subject to persistent disturbances and limited actuator capability/resources. For instance, a geostationary satellite will eventually fall out of its designated position because of orbital perturbations and limited amount of onboard fuel.

For aforementioned situations where constraint violation in finite time is inevitable, drift counteraction optimal control (DCOC) has been proposed in which one seeks to maximize a functional representing the total time or yield before the first occurrence of constraint violation \cite{kolmanovsky2018drift}. In this paper, we focus on a specific class of DCOC problems in which one seeks to maximize the time before system state exits a prescribed set defined by constraints. This time is referred to as the \textit{time-before-exit}. 

In the optimal control literature, DCOC problems are often referred to as ``exit-time'' problems or ``optimal stopping time'' problems \cite{barles1988exit}. For continuous-time deterministic systems, it has been shown that optimal control policy in such problems can be computed via finding the viscosity solutions of the corresponding Hamilton-Jacobi-Bellman (HJB) equation \cite{blanc1997deterministic}. However, it is generally difficult to solve an HJB equation either analytically or numerically, especially considering that it may not admit a smooth solution. As a result, recent advances in DCOC have been in a discrete-time setting \cite{kolmanovsky2008discrete,zidek2015deterministic,zidek2017drift,zidek2016stochastic,zidek2017auto,li2017training,zidek2017lp,zidek2017receding,zidek2018spacecraft}, which is more computationally tractable. Apart from computational benefits, a discrete-time formulation also leads to control solutions that are easily implementable with digital micro-controllers. For discrete-time systems, DCOC approaches based on dynamic programming (DP) \cite{kolmanovsky2008discrete,zidek2015deterministic,zidek2017drift,zidek2016stochastic,zidek2017auto,li2017training} and mixed-integer programming (MIP) \cite{zidek2017lp,zidek2017receding,zidek2018spacecraft} have been developed. However, both DP-based and MIP-based approaches are faced with computational challenges: The former can treat DCOC problems with general yield functional \cite{kolmanovsky2008discrete} but suffer from the \textit{curse of dimensionality} \cite{bertsekas1995dynamic}; this makes them computationally prohibitive to treat higher-order systems. An equivalent MIP reformulation of a discrete-time open-loop DCOC problem is proposed in \cite{zidek2017lp,zidek2017receding,zidek2018spacecraft}. However, because the number of integer variables in this MIP formulation is proportional to the planning horizon and the worst-case computational complexity of MIP grows combinatorially with the number of integer variables \cite{richards2005mixed}, computing numerical solutions to this MIP problem can be very challenging, especially for systems/problems that require longer planning horizons or shorter sampling periods. To alleviate the computational burden, an approach based on linear programming (LP) relaxation of the MIP formulation that replaces integer variables with continuous ones has been proposed in \cite{zidek2017lp} for linear systems. For nonlinear systems, an MPC-based strategy based on linearized prediction models and the LP reformulation is investigated in \cite{zidek2017receding,zidek2018spacecraft}. However, these continuous relaxation-based approaches only provide approximate solutions to the original DCOC problem and do not guarantee maximum \textit{time-before-exit}.

In this paper, we present a novel continuous optimization approach (i.e., based on optimization with only continuous variables) to discrete-time DCOC. In this approach, inspired by a recent result in minimum-time control \cite{verschueren2017stabilizing}, we employ exponentially weighted penalties in the cost function to encourage later constraint violation and show that this leads to guaranteed optimality of the control solution in terms of maximizing the \textit{time-before-exit}. This approach avoids the need for integer variables in optimization and is thus computationally more tractable for larger-dimensional problems. Compared to our preliminary conference paper \cite{tang2021continuous} published in AIAA sponsored American Control Conference, in this journal version we present a more complete treatment including re-worked and more streamlined theoretical results, the treatment of time-varying constraints, and a new case study that highlights the potential for application of these results to spacecraft attitude control.

High-precision attitude control is crucial for successfully performing imaging missions in deep space exploration. Because gas thrusters are often unable to achieve required precision and their use expends the onboard fuel, reaction wheels (RWs) are commonly used to control and/or maintain the spacecraft orientation in a prescribed region. However, RW failures are not uncommon, which can severely impair the spacecraft's mission effectiveness. For instance, two out of four RWs on the Kepler telescope malfunctioned \cite{cowen2013wheels}. When the number of functioning RWs is less than three, a control moment in an arbitrary direction cannot be generated and spacecraft becomes underactuated \cite{crouch1984spacecraft}. 
Under the assumption of zero angular momentum, a spacecraft with two independent RWs is shown to be small-time locally controllable in \cite{krishnan1995attitude}, and various strategies have been proposed for controlling such a spacecraft, including open-loop reorientaion methods \cite{rui2000nonlinear} and discontinuous feedback control \cite{horri2003attitude}. In the case of non-zero angular momentum, a procedure for constructing open-loop controls is introduced in \cite{boyer2007further}, and a method for recovering linear controllability by exploiting solar radiation pressure is proposed in \cite{petersen2016recovering}.

The DCOC can be used to maximize the time duration before spacecraft drifting out of a specified region desired for effectively accomplishing mission objectives (i.e., maximize the \textit{time-before-exit}). In this paper, we apply our continuous optimization approach to the original nonlinear spacecraft attitude dynamics model and demonstrate the effectiveness of our approach in maximizing the \textit{time-before-exit} for the cases of two and three functioning RWs in numerical examples. In \cite{zidek2017receding} and \cite{zidek2018spacecraft}, a DCOC approach based on LP relaxation of an MIP formulation is implemented based on a linearized spacecraft attitude dynamics model. Our work here improves upon the previous work of \cite{zidek2017receding} and \cite{zidek2018spacecraft} in the following two aspects: Firstly, when applied to a same model, our approach produces optimal solutions in terms of maximizing the \textit{time-before-exit}, while the approach of \cite{zidek2017receding} and \cite{zidek2018spacecraft} produces only approximate solutions to the original DCOC problem that may not achieve maximum \textit{time-before-exit}. Secondly, by applying our approach directly to the original nonlinear model, we avoid the errors due to linearization, which can negatively affect the performance of computed control solutions. Note that the LP-based approach of \cite{zidek2017receding} and \cite{zidek2018spacecraft} recomputes the control sequence over a receding horizon in an attempt to compensate for the mismatch between linear model and nonlinear system. Although our approach is directly applicable to more accurate nonlinear models and the principle of optimality \cite{bertsekas1995dynamic} holds for DCOC, a receding horizon approach could also be beneficial in our case if there is a significant model mismatch.

This paper is organized as follows: In Section~\ref{sec:2}, we introduce the general DCOC problem and present our continuous nonlinear programming (NLP)-based approach to DCOC. In Section~\ref{sec:3}, we establish theoretical results that verify the equivalence between the original DCOC problem and our NLP reformulation, and thereby verify the optimality of control solutions obtained by our approach in terms of maximizing the \textit{time-before-exit}. In Section~\ref{sec:4}, we consider the application of our DCOC approach to high-precision spacecraft attitude control, including a nominal case of a fully-actuated spacecraft with three RWs and a case of an underactuated spacecraft with two functioning RWs, where we also demonstrate the computational efficiency of our NLP-based approach. The paper is concluded in Section~\ref{sec:5}.

\section{A Continuous Optimization Approach to Drift Counteraction Optimal Control}\label{sec:2}

The objective of DCOC is to compute an optimal control that maximizes the time duration before the state vector exits a prescribed set. Consider a dynamic system that can be represented by the following discrete-time model,
\begin{equation}
    x_{k+1} = f_d(x_k, u_k),
\end{equation}
where $x_k \in \mathbb{R}^{n_x}$ denotes the state vector at the time instant $k$, and $u_k \in U \subset \mathbb{R}^{n_u}$ denotes the control input vector at $k$. We assume $f_d: \mathbb{R}^{n_x} \times \mathbb{R}^{n_u} \to \mathbb{R}^{n_x}$ is a twice continuously differentiable function (i.e., $f_d \in C^2(\mathbb{R}^{n_x} \times \mathbb{R}^{n_u} \to \mathbb{R}^{n_x})$).

Time-dependent state constraints are considered defined by the sets
\begin{equation}\label{equ:X_k}
    X(k) = \{x_k \in \mathbb{R}^{n_x}: H_k(x_k) \leq h_k\}, \quad k = 0, 1, 2, \dots, 
\end{equation}
where $H_k \in C^2(\mathbb{R}^{n_x} \to \mathbb{R}^{n_{h_k}})$ and $h_k \in \mathbb{R}^{n_{h_k}}$. When a finite horizon of length $N \in \mathbb{Z}$ is considered, we define the \textit{time-before-exit} as follows:
\begin{definition}\label{def:kappa}
    Given an initial condition $x_0 \in X(0)$ and control inputs $\{u_k\}_{k=0}^{N-1}$, the time-before-exit is defined as 
    \begin{equation}\label{eqn:kappa_1}
        \kappa (x_0, \{u_k\}_{k=0}^{N-1}) = \max \left\{k \in \mathbb{Z}_{[1,N]} : x_i \in X(i), i = 0, 1, \dots, k \right\}.
    \end{equation}
    Given an initial condition $x_0 \in X(0)$, the maximum time-before-exit is defined as 
    \begin{equation}\label{eqn:kappa_2}
        \kappa^*(x_0) = \max \left\{\kappa(x_0, \{u_k\}_{k=0}^{N-1}): u_k \in U, k = 0, 1, \dots, N-1 \right\}.
    \end{equation}
\end{definition}

\begin{remark}
    An alternative notion, ``first exit-time,'' is introduced in \cite{zidek2017receding}, which is defined as the first time instant where constraint violation occurs. As a result, the definition of ``first exit-time'' requires that constraint violation must happen during the considered finite horizon of length $N$ (i.e., $\kappa^*(x_0) + 1 \leq N$). In contrast, the notion ``time-before-exit'' considered in this paper enables us to treat a more general class of DCOC problems where constraints can be satisfied over the horizon (in this case $\kappa^*(x_0) = N$).
\end{remark}
 
Assuming the control admissible set $U \subset \mathbb{R}^{n_u}$ is compact and convex, the DCOC problem can be formally expressed as the following optimal control problem:
\begin{subequations}\label{equ:ocp}
    \begin{align}
        \max_{u_0, u_1, \dots, u_{N-1}} &\, \kappa (x_0, \{u_k\}_{k=0}^{N-1}) \\
        \text{subject to } &\, x_{k+1} = f_d(x_k, u_k), \\
        &\, u_k \in U, \quad k = 0, 1, \dots, N-1.
    \end{align}
\end{subequations}
As shown in \cite{zidek2017lp} (see also Remark~2), the above \eqref{equ:ocp} is a well-defined optimization problem.

We consider the following NLP problem with only continuous variables as a computationally efficient reformulation of the DCOC problem \eqref{equ:ocp}:
\begin{subequations}\label{equ:NLP}
    \begin{align}
        \min_{\substack{u_0, u_1, \dots, u_{N-1} \\ \epsilon_0, \epsilon_1, \dots, \epsilon_N}} &\, J_{\eqref{equ:NLP}} = \sum_{k = 0}^{N} \theta^{N-k} \epsilon_k \\
        \text{subject to } 
        &\, x_{k+1} = f_d(x_k, u_k), \label{equ:NLP_1} \\
        &\, u_k \in U, \\
        &\, 0 \leq \epsilon_k \leq \epsilon_{k+1}, \quad k = 0, 1, \dots, N-1, \label{equ:NLP_d} \\
        &\, H_k(x_k) \leq h_k + \textbf{1} M \epsilon_k, \quad k = 0, 1, \dots, N, \label{equ:NLP_2}
    \end{align}
\end{subequations}
where $M > 0$ is a sufficiently large positive number, and $\theta > 1$ is a weighting parameter and chosen to be sufficiently large. We will discuss theoretical properties of the above NLP problem \eqref{equ:NLP}, in particular, show the equivalence between \eqref{equ:ocp} and \eqref{equ:NLP} in the next section.


\begin{remark}
    An MIP-based reformulation of the DCOC problem \eqref{equ:ocp} has been proposed in \cite{zidek2017lp}, which uses binary variables in place of the continuous variables $\epsilon_k$ in \eqref{equ:NLP} and does not use exponential weighting in the cost function. Theorem~1 of \cite{zidek2017lp} establishes the equivalence between this MIP reformulation and the original DCOC problem \eqref{equ:ocp}. Compared to the MIP-based approach of \cite{zidek2017lp}, our continuous optimization approach (i.e., based on the continuous NLP problem \eqref{equ:NLP}) is significantly more computationally efficient while maintaining the equivalence and optimality guarantee in terms of maximizing the \textit{time-before-exit}.
\end{remark}

\section{Theoretical Results}\label{sec:3}

In this section, we discuss the relationship between the DCOC problem \eqref{equ:ocp} and our continuous NLP problem \eqref{equ:NLP}. The connection is established through two related optimization problems with only continuous variables. Based on sensitivity analysis and exact penalty method, we first derive intermediate results in Lemmas~\ref{lemma:dcoc-int}--\ref{lemma:absolute}, and then use them to prove our main results in Theorems~\ref{thm:main}--\ref{thm:convexity}. 

The first related problem is the following parameter-dependent optimization problem with the cost function $\phi(\epsilon_{\kappa^*(x_0)+1}, \dots, \epsilon_{N}) \triangleq \sum_{k = \kappa^*(x_0)+1}^{N}\theta^{N-k} \epsilon_k$:
\begin{subequations}\label{equ:eqcons-ocp}
\begin{align}
    \min_{\substack{u_0, u_1, \dots, u_{N-1} \\ \epsilon_0, \epsilon_1, \dots, \epsilon_N } } & \,\phi(\epsilon_{\kappa^*(x_0)+1}, \dots, \epsilon_{N}) \\
    \text{subject to }
    &\, x_{k+1} = f_d(x_k, u_k), \\
    &\, u_k \in U, \\
    &\, \epsilon_k \leq \epsilon_{k+1}, \quad k = \kappa^*(x_0), \dots, N-1, \\
    &\, H_k(x_k) \leq h_k + \textbf{1}M\epsilon_k, \quad k = 0, 1, \dots, N, \label{equ:eqcons-ocp_1} \\
    &\, \epsilon_{k} = \eta_k, \quad k = 0, 1, \dots, \kappa^*(x_0), \label{equ:eqcons-ocp_2}
\end{align}
\end{subequations}
where $\eta_k$, $k = 0, 1, \dots, \kappa^*(x_0)$, are parameters, and their nominal values are $\eta_k = 0$. To condense the notations, we define the following vectors: $\epsilon = [\epsilon_{\kappa^*(x_0)+1}, \dots, \epsilon_{N}]^\top$, $\eta = [\eta_0, \eta_1, \dots, \eta_{\kappa^*(x_0)}]^\top$, and $\eta^{(k)} = \eta_k e_k$, where $e_k$, $k = 0, 1, \dots, \kappa^*(x_0)$, are the standard basis vectors of $\mathbb{R}^{\kappa^*(x_0)+1}$. Note that when all parameters take their nominal values (i.e., $\eta_k = 0$ for all $k = 0, 1, \dots, \kappa^*(x_0)$), any feasible solution $z = (\{u_k\}_{k=0}^{N-1}, \{\epsilon_k\}_{k=0}^{N})$ must satisfy $\epsilon_k = 0$, for $k = 0, 1, \dots, \kappa^*(x_0)$, due to the constraints in \eqref{equ:eqcons-ocp_2}. Then, the following lemma connects \eqref{equ:eqcons-ocp} and the original DCOC problem \eqref{equ:ocp}:

\begin{lemma}\label{lemma:dcoc-int}
    For $\eta = \textbf{0}$, let $z = (\{u_k\}_{k=0}^{N-1}, \{\epsilon_k\}_{k=0}^{N})$ be a feasible solution to \eqref{equ:eqcons-ocp}. Then, $\{u_k\}_{k=0}^{N-1}$ is a global optimizer of the DCOC problem \eqref{equ:ocp}.
\end{lemma}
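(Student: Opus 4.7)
The plan is to exploit the anchoring role of the equality constraints \eqref{equ:eqcons-ocp_2}: with the parameters set to $\eta=\mathbf{0}$, they pin $\epsilon_k=0$ for every $k\in\{0,1,\dots,\kappa^*(x_0)\}$, and this causes the slack-relaxed state constraints \eqref{equ:eqcons-ocp_1} on the same index range to collapse back to the original hard constraints $H_k(x_k)\le h_k$ that define $X(k)$ in \eqref{equ:X_k}. Just from this observation, feasibility of \eqref{equ:eqcons-ocp} at $\eta=\mathbf{0}$ will force the trajectory to remain inside the prescribed sets for at least $\kappa^*(x_0)$ steps, which is already the maximum possible by the definition of $\kappa^*$.

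Concretely, I would proceed in three short steps. First, substitute $\eta_k=0$ into \eqref{equ:eqcons-ocp_2} to read off $\epsilon_k=0$ for $k=0,1,\dots,\kappa^*(x_0)$. Second, plug these values into \eqref{equ:eqcons-ocp_1} on the same index range to obtain $H_k(x_k)\le h_k$, i.e.\ $x_k\in X(k)$, for $k=0,1,\dots,\kappa^*(x_0)$. Third, note that the dynamics equation $x_{k+1}=f_d(x_k,u_k)$ and the admissibility $u_k\in U$ are shared verbatim between \eqref{equ:eqcons-ocp} and \eqref{equ:ocp}, so $\{u_k\}_{k=0}^{N-1}$ is a feasible control sequence for the DCOC problem \eqref{equ:ocp}. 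Definition~\ref{def:kappa} then yields $\kappa(x_0,\{u_k\}_{k=0}^{N-1})\ge \kappa^*(x_0)$, and the maximality property in \eqref{eqn:kappa_2} forces the reverse inequality, hence equality, which is exactly the claim of global optimality.

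There is no real obstacle at this stage: the cost $\phi$, the monotonicity inequality on the $\epsilon_k$, and the sensitivity/exact-penalty machinery alluded to in the section preamble play no role here, because the lemma only uses feasibility (not optimality) of the auxiliary problem \eqref{equ:eqcons-ocp}. The only points requiring care are index bookkeeping, checking that the overlap of the constraints \eqref{equ:eqcons-ocp_1} and \eqref{equ:eqcons-ocp_2} is precisely $k=0,1,\dots,\kappa^*(x_0)$, and confirming that $\kappa^*(x_0)$ is well-defined so that the set in \eqref{eqn:kappa_1} is non-empty for the argument to be non-vacuous; both are immediate from the standing assumption $x_0\in X(0)$ and the definition of $\kappa^*$. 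The heavier machinery will come into play only in the subsequent lemmas, where one must pass from feasibility of \eqref{equ:eqcons-ocp} at $\eta=\mathbf{0}$ to the behavior of the penalized NLP \eqref{equ:NLP}.
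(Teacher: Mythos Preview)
Your proposal is correct and follows essentially the same approach as the paper: use $\eta=\mathbf{0}$ in \eqref{equ:eqcons-ocp_2} to get $\epsilon_k=0$, combine with \eqref{equ:eqcons-ocp_1} to conclude $x_k\in X(k)$ for $k=0,\dots,\kappa^*(x_0)$, and then invoke Definition~\ref{def:kappa} to obtain $\kappa(x_0,\{u_k\}_{k=0}^{N-1})=\kappa^*(x_0)$. Your version is slightly more explicit about the two-sided inequality and about why the cost and monotonicity constraints play no role, but the argument is the same.
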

\begin{proof}
Because $\eta = \textbf{0}$ and $z$ is a feasible solution, \eqref{equ:eqcons-ocp_1} and \eqref{equ:eqcons-ocp_2} yield that $H_k(x_k) \leq h_k$ (i.e., $x_k \in X(k)$) holds for $k = 0, 1, \dots, \kappa^*(x_0)$. Then, according to \eqref{eqn:kappa_1} and \eqref{eqn:kappa_2} in Definition~\ref{def:kappa}, we have $\kappa (x_0, \{u_k\}_{k=0}^{N-1}) = \kappa^*(x_0)$, i.e., $\{u_k\}_{k=0}^{N-1}$ is a global optimizer of \eqref{equ:ocp}.
\end{proof}

Note that \eqref{equ:eqcons-ocp} cannot be constructed without the knowledge of the maximum \textit{time-before-exit} $\kappa^*(x_0)$, which is typically \textit{a-priori} unknown. Therefore, we now introduce the second related problem, which replaces the parameter-dependent equality constraints $\epsilon_k = \eta_k, k = 0, 1, \dots, \kappa^*(x_0)$, in \eqref{equ:eqcons-ocp_2} with penalty terms $\theta^{N-k}|\epsilon_k|$ in the cost function:
\begin{subequations}\label{equ:exact-ocp}
    \begin{align}
        \min_{\substack{u_0, u_1, \dots, u_{N-1} \\ \epsilon_0, \epsilon_1, \dots, \epsilon_{N}}} &\, J_{\eqref{equ:exact-ocp}} = \sum_{k = 0}^{\kappa^*(x_0)}\theta^{N-k} |\epsilon_k| + \phi(\epsilon_{\kappa^*(x_0)+1}, \dots, \epsilon_{N}) \label{equ:exact-ocp_1}\\
        \text{subject to }
        &\, x_{k+1} = f_d(x_k, u_k), \\
        &\, u_k \in U, \\
        &\, \epsilon_k \leq \epsilon_{k+1}, \quad k = \kappa^*(x_0), \dots, N-1, \label{equ:exact-ocp_d}\\
        &\, H_k(x_k) \leq h_k + \textbf{1}M\epsilon_k, \quad k = 0, 1, \dots, N.
    \end{align}
\end{subequations}
We now make the following two assumptions about \eqref{equ:eqcons-ocp}, which will facilitate establishing the relationship between \eqref{equ:eqcons-ocp} and \eqref{equ:exact-ocp}:

\begin{assumption}
    For a given $\eta \in \mathbb{R}^{\kappa^*(x_0)+1}$, we denote a minimizer of \eqref{equ:eqcons-ocp} by $z(\eta) = (\{u_k(\eta)\}_{k=0}^{N-1}, \{\epsilon_k(\eta)\}_{k=0}^N) \in \mathbb{R}^{n_z}$ and its associated Lagrange multiplier vector by $\lambda(\eta) \in \mathbb{R}^{n_{\lambda}}$. We assume that for $\eta = \textbf{0}$, the pair $\left(z(\textbf{0}), \lambda(\textbf{0})\right)$ satisfies the strong second-order sufficient conditions (see Theorem~2 of \cite{buskens2001sensitivity}).
    \label{assum:SSC}
\end{assumption}

Under Assumption~\ref{assum:SSC} and according to Theorem~3 of \cite{buskens2001sensitivity}, there exists a neighborhood of $\textbf{0}$, $V \subset \mathbb{R}^{\kappa^*(x_0)+1}$, and continuously differentiable $z(\eta): V \to \mathbb{R}^{n_z}$ and $\lambda(\eta): V \to \mathbb{R}^{n_{\lambda}}$ such that for all $\eta \in V$, the pair $\left(z(\eta), \lambda(\eta)\right)$ satisfies the strong second-order sufficient conditions. Furthermore, the following sensitivity result holds \cite{buskens2001sensitivity}: For $k = 0, 1, \dots, \kappa^*(x_0)$,
\begin{equation}\label{equ:sensitivity}
    \lim_{\eta_k \to 0^+} \frac{\phi(\epsilon(\eta^{(k)})) - \phi(\epsilon(\textbf{0}))}{\eta_k - 0} = -\lambda_k(\textbf{0}),
\end{equation}
where $\lambda_k(\textbf{0})$ is the Lagrange multiplier associated with the equality constraint $\epsilon_k = \eta_k = 0$.

\begin{assumption}\label{assum:sens}
    There exists $L > 0$ such that,
    \begin{equation}\label{equ:Lsens}
    \bigg\rvert \lim_{\eta_k \to 0^+} \frac{\epsilon_i(\eta^{(k)}) - \epsilon_i(\textbf{0})}{\eta_k - 0} \bigg\rvert \leq L,
    \end{equation}
    for all $i = \kappa^*(x_0)+1, \dots, N$, all $k = 0, 1, \dots, \kappa^*(x_0)$, and all $\theta > 1$.
\end{assumption}

Under Assumptions~\ref{assum:SSC} and \ref{assum:sens}, the relationship between \eqref{equ:eqcons-ocp} and \eqref{equ:exact-ocp} is stated in the following lemma:


\begin{lemma}\label{lemma:equal}
    Under Assumptions~\ref{assum:SSC} and \ref{assum:sens}, there exists $\theta_0 > 1$ such that if $\theta \geq \theta_0$, then \eqref{equ:eqcons-ocp} with $\eta = \textbf{0}$ and \eqref{equ:exact-ocp} share the same set of (local) minimizers.
\end{lemma}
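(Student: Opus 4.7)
The plan is to recognize Lemma~\ref{lemma:equal} as an instance of the classical \emph{exact $\ell_1$ penalty} correspondence: problem \eqref{equ:exact-ocp} is obtained from \eqref{equ:eqcons-ocp} (with $\eta=\mathbf{0}$) by dropping the equality constraints $\epsilon_k=0$, $k=0,\dots,\kappa^*(x_0)$, and adding the weighted $\ell_1$ terms $\theta^{N-k}|\epsilon_k|$ to the cost. The standard exact penalty result (for example, Proposition~5.4.5 in Bertsekas's \emph{Nonlinear Programming}) says that if a KKT point of the equality-constrained problem satisfies the strong second-order sufficient conditions and the penalty weight on each equality constraint strictly exceeds the absolute value of the associated Lagrange multiplier, then that KKT point is also a strict local minimizer of the penalized problem, and conversely any local minimizer of the penalized problem with zero penalty term is a local minimizer of the equality-constrained problem. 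Assumption~\ref{assum:SSC} gives exactly the regularity required to invoke this result at $z(\mathbf{0})$, so the entire proof reduces to verifying, for $\theta$ large enough, the penalty-vs-multiplier inequality $\theta^{N-k} > |\lambda_k(\mathbf{0})|$ for every $k=0,1,\dots,\kappa^*(x_0)$.

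To control the multipliers, I would plug the explicit expression $\phi(\epsilon_{\kappa^*(x_0)+1},\dots,\epsilon_N)=\sum_{i=\kappa^*(x_0)+1}^{N}\theta^{N-i}\epsilon_i$ into the sensitivity identity~\eqref{equ:sensitivity}, producing
\begin{equation*}
-\lambda_k(\mathbf{0})=\sum_{i=\kappa^*(x_0)+1}^{N}\theta^{N-i}\lim_{\eta_k\to 0^+}\frac{\epsilon_i(\eta^{(k)})-\epsilon_i(\mathbf{0})}{\eta_k}.
\end{equation*}
Applying Assumption~\ref{assum:sens} termwise and summing the resulting geometric series gives the uniform bound
\begin{equation*}
|\lambda_k(\mathbf{0})|\;\leq\;L\sum_{i=\kappa^*(x_0)+1}^{N}\theta^{N-i}\;=\;L\,\frac{\theta^{\,N-\kappa^*(x_0)}-1}{\theta-1},
\end{equation*}
valid for all $k\in\{0,\dots,\kappa^*(x_0)\}$ and all $\theta>1$. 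The most restrictive instance of the penalty-vs-multiplier inequality is $k=\kappa^*(x_0)$, namely $\theta^{\,N-\kappa^*(x_0)}\geq L(\theta^{\,N-\kappa^*(x_0)}-1)/(\theta-1)$; after clearing denominators this reduces to $\theta-1\geq L\bigl(1-\theta^{-(N-\kappa^*(x_0))}\bigr)$, which is satisfied for every $\theta\geq L+1$. Choosing $\theta_0=L+1$ (with any strict tightening needed to make the inequality strict) therefore guarantees $\theta^{N-k}>|\lambda_k(\mathbf{0})|$ simultaneously for all relevant $k$ whenever $\theta\geq\theta_0$.

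With the penalty condition verified, the two directions of the lemma follow. For the forward direction, the exact penalty theorem immediately promotes the minimizer $z(\mathbf{0})$ of \eqref{equ:eqcons-ocp} at $\eta=\mathbf{0}$ to a local minimizer of \eqref{equ:exact-ocp}. For the converse, I would argue that at any local minimizer of \eqref{equ:exact-ocp} the components $\epsilon_k$ for $k\leq\kappa^*(x_0)$ must vanish: by the definition of $\kappa^*(x_0)$ there is a feasible plan achieving $\epsilon_0=\cdots=\epsilon_{\kappa^*(x_0)}=0$, and any local competitor with some $\epsilon_k\ne 0$ for $k\leq\kappa^*(x_0)$ would incur a penalty cost $\theta^{N-k}|\epsilon_k|$ that, by the same multiplier bound, dominates any savings available in the tail $\phi(\epsilon_{\kappa^*(x_0)+1},\dots,\epsilon_N)$; hence every such local minimizer automatically satisfies \eqref{equ:eqcons-ocp_2} with $\eta=\mathbf{0}$ and is a local minimizer of \eqref{equ:eqcons-ocp}. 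The main obstacle I anticipate is not the forward direction but making this converse rigorous while only using Assumption~\ref{assum:sens} locally: the sensitivity bound describes infinitesimal perturbations of $\eta$ around $\mathbf{0}$, so a careful argument is needed to extend it to a neighborhood in $(\epsilon_0,\dots,\epsilon_{\kappa^*(x_0)})$-space large enough to rule out competing local minima, and this is where the SSOSC from Assumption~\ref{assum:SSC} (together with the continuous differentiability of $z(\eta)$ and $\lambda(\eta)$ guaranteed after it) will do the real work.
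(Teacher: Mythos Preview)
Your proposal is correct and follows precisely the approach that the paper's setup anticipates: the paper itself defers the proof to Lemma~1 of \cite{tang2021continuous}, but the machinery it lays out beforehand---the sensitivity identity~\eqref{equ:sensitivity} combined with the uniform derivative bound of Assumption~\ref{assum:sens}---is exactly what is needed to bound $|\lambda_k(\mathbf{0})|$ by the geometric sum $L(\theta^{N-\kappa^*(x_0)}-1)/(\theta-1)$ and then invoke the standard exact $\ell_1$ penalty correspondence once $\theta^{N-k}$ dominates this bound. Your identification of $k=\kappa^*(x_0)$ as the binding case and the resulting threshold $\theta_0=L+1$ are on target, and your caveat about the converse direction is well placed but ultimately handled by the SSOSC of Assumption~\ref{assum:SSC} in the usual way.
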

\begin{proof}
See Lemma~1 of \cite{tang2021continuous}.
\end{proof}

Based on Lemma~\ref{lemma:equal}, we now state the relationship between \eqref{equ:NLP} and \eqref{equ:exact-ocp} in the following lemma:

\begin{lemma}\label{lemma:absolute}
    Under Assumptions~\ref{assum:SSC} and \ref{assum:sens}, when $\theta > 1$ is sufficiently large, \eqref{equ:NLP} and \eqref{equ:exact-ocp} share the same set of global minimizers.
\end{lemma}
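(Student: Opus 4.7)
The plan is to view \eqref{equ:NLP} as a restriction of \eqref{equ:exact-ocp}: the NLP additionally imposes $\epsilon_k \geq 0$ for all $k$ and extends the monotonicity $\epsilon_k \leq \epsilon_{k+1}$ from $k \in \{\kappa^*(x_0),\dots,N-1\}$ to all $k \in \{0,\dots,N-1\}$, while keeping the same dynamics, control admissibility, and relaxed state constraints. Hence any point feasible for \eqref{equ:NLP} is feasible for \eqref{equ:exact-ocp}, and on this common domain the two objectives coincide because $\epsilon_k \geq 0$ gives $|\epsilon_k| = \epsilon_k$, so $J_{\eqref{equ:NLP}} = J_{\eqref{equ:exact-ocp}}$. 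This immediately delivers the easy inequality $V_{\eqref{equ:NLP}} \geq V_{\eqref{equ:exact-ocp}}$ between the optimal values.

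The crux is to show that every global minimizer of \eqref{equ:exact-ocp} is in fact feasible for \eqref{equ:NLP}. Let $z^* = (\{u_k^*\}_{k=0}^{N-1}, \{\epsilon_k^*\}_{k=0}^N)$ be such a global minimizer. Being also a local minimizer, Lemma~\ref{lemma:equal} identifies it, for $\theta \geq \theta_0$, with a local minimizer of \eqref{equ:eqcons-ocp} at $\eta = \textbf{0}$; feasibility under the equality constraints \eqref{equ:eqcons-ocp_2} then forces $\epsilon_k^* = 0$ for $k = 0, 1, \dots, \kappa^*(x_0)$. Cascading the monotonicity \eqref{equ:exact-ocp_d} upward from the anchor $\epsilon_{\kappa^*(x_0)}^* = 0$ yields
\begin{equation*}
0 = \epsilon_{\kappa^*(x_0)}^* \leq \epsilon_{\kappa^*(x_0)+1}^* \leq \cdots \leq \epsilon_N^*,
\end{equation*}
so $\epsilon_k^* \geq 0$ for every $k$ and the full chain $\epsilon_k^* \leq \epsilon_{k+1}^*$ for $k \in \{0,\dots,N-1\}$ holds trivially across the first $\kappa^*(x_0)$ indices, which are all zero. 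Therefore $z^*$ satisfies every constraint of \eqref{equ:NLP}, and the two objectives agree at $z^*$ since $|\epsilon_k^*| = \epsilon_k^*$.

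Combining the two steps gives $V_{\eqref{equ:NLP}} \leq J_{\eqref{equ:NLP}}(z^*) = J_{\eqref{equ:exact-ocp}}(z^*) = V_{\eqref{equ:exact-ocp}} \leq V_{\eqref{equ:NLP}}$, so the optimal values coincide and $z^*$ globally minimizes \eqref{equ:NLP}. The converse is symmetric: any global minimizer of \eqref{equ:NLP} is feasible for \eqref{equ:exact-ocp} with cost equal to the common optimal value, hence globally minimizes \eqref{equ:exact-ocp}. The main obstacle is the appeal to Lemma~\ref{lemma:equal}: without the exact-penalty guarantee that $\epsilon_k^* = 0$ for $k \leq \kappa^*(x_0)$, one could a priori imagine an optimizer of \eqref{equ:exact-ocp} pushing some early $\epsilon_k^*$ slightly negative (keeping $|\epsilon_k^*|$ small while starting the later monotonic chain below zero), which would make \eqref{equ:exact-ocp} strictly cheaper than \eqref{equ:NLP}; choosing $\theta \geq \theta_0$ closes that door, after which the remainder is routine constraint-and-cost bookkeeping.
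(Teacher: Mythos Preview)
Your proof is correct and follows essentially the same approach as the paper: both arguments use that the feasible set of \eqref{equ:NLP} is contained in that of \eqref{equ:exact-ocp} with the objectives agreeing there, and both invoke Lemma~\ref{lemma:equal} to force $\epsilon_k^* = 0$ for $k \le \kappa^*(x_0)$ at any global minimizer of \eqref{equ:exact-ocp}, then chain the resulting inequalities. You actually spell out one step the paper elides---explicitly using the monotonicity \eqref{equ:exact-ocp_d} from the anchor $\epsilon_{\kappa^*(x_0)}^*=0$ to verify $\epsilon_k^*\ge 0$ for $k>\kappa^*(x_0)$, which is needed for \eqref{equ:NLP}-feasibility---but otherwise the two arguments are the same up to bookkeeping.
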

\begin{proof}
    Let $z^* = (\{u^*_k\}_{k=0}^{N-1}, \{\epsilon^*_k\}_{k=0}^N)$ be a global minimizer of \eqref{equ:NLP}. Since the set of constraints of \eqref{equ:exact-ocp} is a subset of the constraints of \eqref{equ:NLP}, $z^*$ is a feasible point of \eqref{equ:exact-ocp}. Let $z^{\prime} = (\{u^{\prime}_k\}_{k=0}^{N-1}, \{\epsilon^{\prime}_k\}_{k=0}^N)$ be a global minimizer of \eqref{equ:exact-ocp}. By Lemma~\ref{lemma:equal}, $z^{\prime}$ is at least a local minimizer of \eqref{equ:eqcons-ocp} with $\eta = \textbf{0}$ and hence satisfies $\epsilon^{\prime}_{k} = \eta_k = 0$ for $k = 0, 1, \dots, \kappa^*(x_0)$. In turn, $z^{\prime}$ is a feasible point of \eqref{equ:NLP}. Since $z^*$ and $z^{\prime}$ are global minimizers of \eqref{equ:NLP} and \eqref{equ:exact-ocp}, respectively, and are feasible points of \eqref{equ:exact-ocp} and \eqref{equ:NLP}, respectively, we have $J_{\eqref{equ:NLP}}(z^*) \le J_{\eqref{equ:NLP}}(z^{\prime})$ and $J_{\eqref{equ:exact-ocp}}(z^{\prime}) \le J_{\eqref{equ:exact-ocp}}(z^*)$. However, for any point $z$ that is feasible to \eqref{equ:NLP} (and hence is also feasible to \eqref{equ:exact-ocp}), we have
    \begin{align*}
        J_{\eqref{equ:NLP}}(z) &= \sum_{k = 0}^{\kappa^*(x_0)}\theta^{N-k} \epsilon_k + \sum_{k = \kappa^*(x_0)+1}^{N}\theta^{N-k} \epsilon_k \\
        &= \sum_{k = 0}^{\kappa^*(x_0)}\theta^{N-k} |\epsilon_k| + \sum_{k = \kappa^*(x_0)+1}^{N}\theta^{N-k} \epsilon_k = J_{\eqref{equ:exact-ocp}}(z).
    \end{align*}
    Note that we have used $\epsilon_k \ge 0$ (due to the constraints in \eqref{equ:NLP_d}) to derive the equality. This implies $J_{\eqref{equ:NLP}}(z^*) \le J_{\eqref{equ:NLP}}(z^{\prime}) = J_{\eqref{equ:exact-ocp}}(z^{\prime}) \le J_{\eqref{equ:exact-ocp}}(z^*) = J_{\eqref{equ:NLP}}(z^*)$. Therefore, we have $J_{\eqref{equ:NLP}}(z^*) = J_{\eqref{equ:NLP}}(z^{\prime})$ and $J_{\eqref{equ:exact-ocp}}(z^{\prime}) = J_{\eqref{equ:exact-ocp}}(z^*)$, which implies that $z'$ and $z^*$ are not only feasible points but indeed also global minimizers of \eqref{equ:NLP} and \eqref{equ:exact-ocp}, respectively. This proves Lemma~\ref{lemma:absolute}.
\end{proof}

We are now ready to establish the connection between the DCOC problem \eqref{equ:ocp} and our NLP problem \eqref{equ:NLP} in the following theorem:

\begin{theorem}\label{thm:main}
    Given $x_0 \in X(0)$ and under Assumptions~\ref{assum:SSC} and \ref{assum:sens}, when $\theta > 1$ is sufficiently large, any global minimizer of \eqref{equ:NLP}, $(\{u^*_k\}_{k=0}^{N-1},\{\epsilon^*_k\}_{k=0}^N)$, must satisfy $\epsilon^*_k = 0$ for all $k = 0, 1,\dots,\kappa^*(x_0)$, where $\kappa^*(x_0)$ is the maximum \textit{time-before-exit} defined in \eqref{eqn:kappa_2}. In turn, the control input sequence $\{u^*_k\}_{k=0}^{N-1}$ is a global optimizer of the DCOC problem \eqref{equ:ocp}.
\end{theorem}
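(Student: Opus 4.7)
My plan is to chain the three lemmas together, using Lemma~\ref{lemma:absolute} to pass from \eqref{equ:NLP} to \eqref{equ:exact-ocp}, Lemma~\ref{lemma:equal} to pass from \eqref{equ:exact-ocp} to \eqref{equ:eqcons-ocp} with $\eta=\textbf{0}$, and finally Lemma~\ref{lemma:dcoc-int} to conclude optimality for the DCOC problem \eqref{equ:ocp}. I would take $\theta$ large enough that both Lemma~\ref{lemma:equal} and Lemma~\ref{lemma:absolute} apply simultaneously (e.g.\ the maximum of the two thresholds they provide), and then run the argument on a given global minimizer $z^* = (\{u^*_k\}_{k=0}^{N-1},\{\epsilon^*_k\}_{k=0}^{N})$ of \eqref{equ:NLP}.

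First I would invoke Lemma~\ref{lemma:absolute} to conclude that $z^*$ is also a global minimizer of \eqref{equ:exact-ocp}. Next, a global minimizer is a fortiori a local minimizer, so Lemma~\ref{lemma:equal} tells me that $z^*$ is a local minimizer of \eqref{equ:eqcons-ocp} evaluated at $\eta=\textbf{0}$; in particular, it is feasible for that problem. The parameter-dependent equality constraints \eqref{equ:eqcons-ocp_2} with $\eta=\textbf{0}$ immediately force $\epsilon^*_k = 0$ for every $k = 0,1,\dots,\kappa^*(x_0)$, which is the first claim of the theorem.

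Finally, because $z^*$ is feasible for \eqref{equ:eqcons-ocp} with $\eta=\textbf{0}$, Lemma~\ref{lemma:dcoc-int} applies directly and yields that $\{u^*_k\}_{k=0}^{N-1}$ is a global optimizer of the DCOC problem \eqref{equ:ocp}, i.e.\ it achieves the maximum time-before-exit $\kappa^*(x_0)$. This finishes the argument.

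The subtle point I would emphasize is that the bridge only works because Lemma~\ref{lemma:absolute} upgrades a global minimizer of \eqref{equ:NLP} to a global minimizer of \eqref{equ:exact-ocp} (not just a local one), while Lemma~\ref{lemma:equal} only promises agreement on local minimizers; the downgrade from global to local is harmless here and is what makes the composition valid. I do not expect a serious obstacle beyond carefully choosing a single $\theta$ that works for both preceding lemmas and being explicit that feasibility of $z^*$ for \eqref{equ:eqcons-ocp} with $\eta=\textbf{0}$ (which is all Lemma~\ref{lemma:dcoc-int} requires) is delivered by the local-minimizer property.
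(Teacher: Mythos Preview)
Your proposal is correct and mirrors the paper's own proof essentially step for step: apply Lemma~\ref{lemma:absolute} to pass to \eqref{equ:exact-ocp}, then Lemma~\ref{lemma:equal} to obtain feasibility (hence $\epsilon^*_k=0$) in \eqref{equ:eqcons-ocp} with $\eta=\textbf{0}$, and finally Lemma~\ref{lemma:dcoc-int} to conclude optimality for \eqref{equ:ocp}. Your remarks about choosing a single sufficiently large $\theta$ and the harmless global-to-local downgrade are exactly the implicit bookkeeping the paper's proof relies on.
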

\begin{proof}
    Firstly, by Lemma~\ref{lemma:absolute}, any global minimizer of \eqref{equ:NLP}, $z^* = (\{u^*_k\}_{k=0}^{N-1}, \{\epsilon^*_k\}_{k=0}^N)$, must also be a global minimizer of \eqref{equ:exact-ocp}. Then, by Lemma~\ref{lemma:equal}, $z^*$ must be a (local) minimizer of \eqref{equ:eqcons-ocp} with $\eta = \textbf{0}$. In particular, $z^*$ is a feasible point of \eqref{equ:eqcons-ocp} and thus satisfies $\epsilon^*_k = \eta_k = 0$ for $k = 0, 1, \dots, \kappa^*(x_0)$. Finally, by Lemma~\ref{lemma:dcoc-int}, $\{u^*_k\}_{k=0}^{N-1}$ is indeed a global optimizer of the DCOC problem \eqref{equ:ocp}.
\end{proof}

Theorem~\ref{thm:main} says that a globally optimal solution to our NLP problem \eqref{equ:NLP} provides a control input sequence $\{u^*_k\}_{k=0}^{N-1}$ that
maximizes the \textit{time-before-exit} \eqref{eqn:kappa_1}, i.e., solves the DCOC problem \eqref{equ:ocp}. In practice, NLP problems are frequently solved using gradient-based algorithms (such as the interior-point method and the sequential quadratic programming method), which converge to only local minimizers. We now make the following additional assumption, which will enable us to extend Theorem~\ref{thm:main} to local minimizers:


\begin{assumption}\label{assum:convex}
    Let $Z \subset \mathbb{R}^{n_z}$ denote the feasible region of \eqref{equ:NLP} defined by the constraints \eqref{equ:NLP_1}-\eqref{equ:NLP_2}, and let $z^* = (\{u^*_k\}_{k=0}^{N-1}, \{\epsilon^*_k\}_{k=0}^N)$ be a global minimizer of \eqref{equ:NLP}. For any $z_0 \in Z$, there exists $r_0(z_0) > 0$ such that $z_0 + r (z^* - z_0) \in Z$ for all $r \in [0, r_0(z_0)]$.
\end{assumption}

Assumption~\ref{assum:convex} holds for many cases. For instance, if $f_d$ is linear and all components of $H_k$ are convex functions, then the feasible region $Z$ is convex, and in this case Assumption~\ref{assum:convex} holds true. More generally, if $Z$ is star-shaped with $z^*$ as a star center, then Assumption~\ref{assum:convex} also holds true. Under Assumption~\ref{assum:convex}, we have the following result:

\begin{theorem}\label{thm:convexity}
    Under Assumption~\ref{assum:convex}, local minimizers of \eqref{equ:NLP} are all global minimizers.
\end{theorem}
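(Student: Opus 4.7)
The plan is to exploit the linearity of the NLP cost function in the decision variable together with the star-shaped feasibility structure guaranteed by Assumption~\ref{assum:convex}. Observe that $J_{\eqref{equ:NLP}}(z) = \sum_{k=0}^{N}\theta^{N-k}\epsilon_k$ depends only on the $\epsilon$-components of $z = (\{u_k\},\{\epsilon_k\})$, and it does so linearly. In particular, for any two points $z_0, z^* \in \mathbb{R}^{n_z}$ and any $r \in \mathbb{R}$, we have the affine identity $J_{\eqref{equ:NLP}}\bigl(z_0 + r(z^* - z_0)\bigr) = (1-r)\,J_{\eqref{equ:NLP}}(z_0) + r\,J_{\eqref{equ:NLP}}(z^*)$. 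This single observation does essentially all the work.

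Next, let $z_0 \in Z$ be an arbitrary local minimizer of \eqref{equ:NLP} and let $z^*$ be a global minimizer, as in Assumption~\ref{assum:convex}. By that assumption, there exists $r_0(z_0) > 0$ such that the segment $z_0 + r(z^* - z_0)$ lies in $Z$ for every $r \in [0, r_0(z_0)]$. Combining this feasibility with the affine identity above, I obtain, for all $r \in [0, r_0(z_0)]$,
\begin{equation*}
J_{\eqref{equ:NLP}}\bigl(z_0 + r(z^* - z_0)\bigr) = J_{\eqref{equ:NLP}}(z_0) + r\,\bigl(J_{\eqref{equ:NLP}}(z^*) - J_{\eqref{equ:NLP}}(z_0)\bigr).
\end{equation*}

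Finally, I would argue by contradiction. Since $z^*$ is a global minimizer, $J_{\eqref{equ:NLP}}(z^*) \leq J_{\eqref{equ:NLP}}(z_0)$. If this inequality were strict, the displayed equality would imply that for every $r \in (0, r_0(z_0)]$ the feasible point $z_0 + r(z^* - z_0)$ attains a cost strictly less than $J_{\eqref{equ:NLP}}(z_0)$. Letting $r \to 0^+$ produces feasible points arbitrarily close to $z_0$ with strictly smaller cost, contradicting the local minimality of $z_0$. Hence $J_{\eqref{equ:NLP}}(z_0) = J_{\eqref{equ:NLP}}(z^*)$, so $z_0$ is itself a global minimizer.

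There is no real obstacle in this argument; the only point requiring care is to make the linearity of $J_{\eqref{equ:NLP}}$ explicit (so that convex combinations of cost values are exact, not just upper bounds as for general convex objectives) and to invoke Assumption~\ref{assum:convex} with the correct direction, namely from the candidate local minimizer $z_0$ toward the global minimizer $z^*$, rather than the reverse.
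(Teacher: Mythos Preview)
Your argument is correct and is precisely the standard one for this setting: linearity of $J_{\eqref{equ:NLP}}$ in $z$ combined with the star-shaped feasibility direction toward $z^*$ from Assumption~\ref{assum:convex} forces any local minimizer to share the global minimum value. This matches the approach the paper defers to (Theorem~2 of \cite{tang2021continuous}), so there is nothing to add.
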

\begin{proof}
See Theorem~2 of \cite{tang2021continuous}.
\end{proof}

Theorem~\ref{thm:convexity} guarantees that, under Assumption~\ref{assum:convex}, any local minimizer of our NLP problem \eqref{equ:NLP} (e.g., obtained by a gradient-based algorithm) solves the DCOC problem \eqref{equ:ocp}.


\section{Case Study: Spacecraft Attitude Control}\label{sec:4}

High-precision pointing is desirable in many space missions including deep space telescope imaging and optical communication. High-precision pointing is often achieved using internal torque actuators such as reaction wheels (RWs). Failure of some RWs results in inaccessible spacecraft dynamics \cite{crouch1984spacecraft}, which may lead to failure of certain mission objectives. In this section, we present a case study of high-precision attitude control for a spacecraft with three RWs and an underactuated spacecraft with two functioning RWs. 

\subsection{Spacecraft attitude dynamics model}

We consider a spacecraft consisting of a cuboid bus and $p$ reaction wheels. An inertial frame $\mathcal{I}$ and a body-fixed frame $\mathcal{B}$ are used to represent spacecraft orientation. Throughout this section, a physical vector is denoted as $\vec{r}$, and its corresponding mathematical vector resolved in a frame $\mathcal{H}$ is denoted as $\vec{r}|_{\mathcal{H}}$. For simplicity, we denote the mathematical vector of $\vec{r}$ resolved in the body-fixed frame as $\Bar{r}$, i.e., $\Bar{r} = \vec{r}|_{\mathcal{B}}$. The spacecraft state and control input vectors are defined as
\begin{equation}
    x = \begin{bmatrix}
        \phi & \theta & \psi & \omega_1 & \omega_2 & \omega_3 & \nu_1 & \dots & \nu_p
    \end{bmatrix}^T, \quad u = \begin{bmatrix}
        \dot{\nu}_1 & \dots & \dot{\nu}_p
    \end{bmatrix}^T,
\end{equation}
where $\phi, \theta, \psi$ are the 3-2-1 Euler angles which characterize the sequence of rotations from the inertial frame $\mathcal{I}$ to the body-fixed frame $\mathcal{B}$, $\Bar{\omega} = [\omega_1, \omega_2, \omega_3]^T$ is the angular velocity vector of the spacecraft resolved in the body-fixed frame, $\Bar{\nu} = [\nu_1, \dots, \nu_p]^T$ are the spin rates of $p$ RWs, and $\dot{\nu}_1, \dots, \dot{\nu}_p$ are the angular acceleration of $p$ RWs. 

Assuming all $p$ RWs are identical and off-spin-axis moments of inertia of each RW are sufficiently small, let $J_w$ be the moment of inertia of each RW about its spin axis. In the body-fixed frame $\mathcal{B}$, the unit direction vector from the cuboid bus to the $i$th RW is denoted as $\Bar{g}_i$, and we define $W = [\Bar{g}_1, \dots, \Bar{g}_p]$. Let $J = diag(J_1, J_2, J_3)$ denote the inertia matrix of the cuboid bus in frame $\mathcal{B}$, and define the locked inertia as $\Bar{J} = J + J_w W W^T$. Based on Newton's second law for rotation and kinematic equations, the nonlinear dynamics model in continuous-time takes the form,
\begin{subequations}\label{equ:cont-dyn}
    \begin{align}
    \begin{bmatrix}
        \dot{\phi} \\ \dot{\theta} \\ \dot{\psi}
    \end{bmatrix} &= \frac{1}{\cos(\theta)}\begin{bmatrix}
        \cos(\theta) & \sin(\phi)\sin(\theta) & \cos(\phi)\sin(\theta)\\
        0 & \cos(\phi)\cos(\theta) & -\sin(\phi) \cos(\theta)\\
        0 & \sin(\phi) & \cos(\phi)
    \end{bmatrix} \begin{bmatrix}
        \omega_1 \\ \omega_2 \\ \omega_3
    \end{bmatrix}\\
    \dot{\Bar{\omega}} &= \Bar{J}^{-1}(\Bar{\tau}_{srp} - S[\Bar{\omega}](\Bar{J}\Bar{\omega} + J_w W \Bar{\nu}) - J_w W u)\\
    \dot{\Bar{\nu}} &= u
\end{align}
\end{subequations}
where $\Bar{\tau}_{srp}$ refers to the torque vector generated by solar radiation pressure (SRP) resolved in the body-fixed frame, which is a nonlinear function of the Euler angles $\phi, \theta, \psi$ and taken from \cite{petersen2016recovering}. Let $\dot{x} = f(x,u)$ be the short-hand notation for the continuous-time model \eqref{equ:cont-dyn}. Using Euler's method with sampling period $\Delta t$, a discrete-time model, $x_{k+1} = f_d(x_k, u_k) = x_k + f(x_k, u_k) \Delta t$, is obtained.

\subsection{Attitude control simulation results}

In the numerical experiments, we consider two scenarios where two or three RWs are operational, i.e. $p = 2, 3$. When $p = 3$, assuming the direction vectors of RW spinning axes are linearly independent, the reaction wheels are able to generate torque in an arbitrary direction in the body-fixed frame. When $p < 3$, the spacecraft becomes underactuated. In the sequel we consider the case $p = 2$ where only two RWs are operational to demonstrate the capability of our NLP-based approach to DCOC to maintain the attitude in the desired range for maximum time duration. The parameters used in the experiments are reported in Table~\ref{tab:parameter}.
\begin{table}[H]
\caption{Parameter values.}
    \centering
    \begin{tabular}{c c c}
    \hline
    \hline
       Quantity & Value & Unit \\
    \hline
        $J_1, J_2, J_3$  & 430, 1210, 1300 & $kg/m^2$\\
    \hline
        $J_w$ & 0.043 & $kg/m^2$\\
    \hline 
        $L_x, L_y, L_z$ & 2, 2.5, 5 & $m$\\
    \hline
        $l_x, l_y, l_z$ & 0, 0.5, 0 & $m$\\
    \hline
        $\Phi_s$ & 1367 & $W/m^2$\\
    \hline
        $C_{diff}$ & 0.2 & [-]\\
    \hline 
        $\hat{u}_{S}$ & $[1/\sqrt{3}, 1/\sqrt{3}, 1/\sqrt{3}]^T$ & [-]\\
    \hline
    \hline
    \end{tabular}
    \label{tab:parameter}
\end{table}

\subsubsection{Three reaction wheels: $p = 3$}

In the numerical experiments with three operational reaction wheels, the unit vectors of the RW spinning axis directions resolved in the body-fixed frame $\mathcal{B}$ are defined as
\begin{equation*}
    W = [\Bar{g}_1, \Bar{g}_2, \bar{g}_3] = \begin{bmatrix}
        1 & 0 & 0 \\ 0 & 1 & 0 \\ 0 & 0 & 1
    \end{bmatrix}^T.
\end{equation*}

The following constraints for state and control input variables are considered,
\begin{align*}
    -0.003 [rad] &\leq \phi \leq 0.002 [rad], \quad -0.00065 [rad] \leq \theta \leq 0.00135 [rad], \quad -0.01 [rad] \leq \psi \leq 0.01 [rad]\\
    20 [rad/s]&\leq ||\Bar{\nu}||_1 \leq 80 [rad/s], \quad 0 [rad/s^2]\leq ||u||_1 \leq 2 [rad/s^2]
\end{align*}
where $||\cdot||_1$ is the 1-norm of a vector. Note that the spinning rates of three RWs are all lower bounded by $20\ [rad/s]$, which prevents zero speed crossings and additional RW wear. The initial condition for state variables are chosen as 
\begin{align*}
    [\phi_0, \theta_0, \psi_0] &= [-1\times 10^{-3}, 3.5\times 10^{-4}, -5\times 10^{-4}] \quad [rad]\\
    [\Bar{\omega}_{1, 0}, \Bar{\omega}_{2, 0}, \Bar{\omega}_{3, 0}] &= [-5, 2, 5]\times 10^{-4} \quad [rad/s]\\
    [\Bar{\nu}_{1, 0}, \Bar{\nu}_{2, 0}, \bar{\nu}_{3, 0}] &= [50, 50, 50] \quad [rad/s]
\end{align*}

We choose a sampling period of $\Delta t = 2\ [sec]$, and a prediction horizon of $N = 75$. Using ``ipopt'' solver and $\theta = 1.1$, the numerical simulation results of the proposed NLP approach to DCOC \eqref{equ:NLP} are reported in Fig.~\ref{fig:3RW_1} below.
\begin{figure}[H]
\begin{center}
\begin{picture}(400, 370)
\put(  0,  185){\epsfig{file=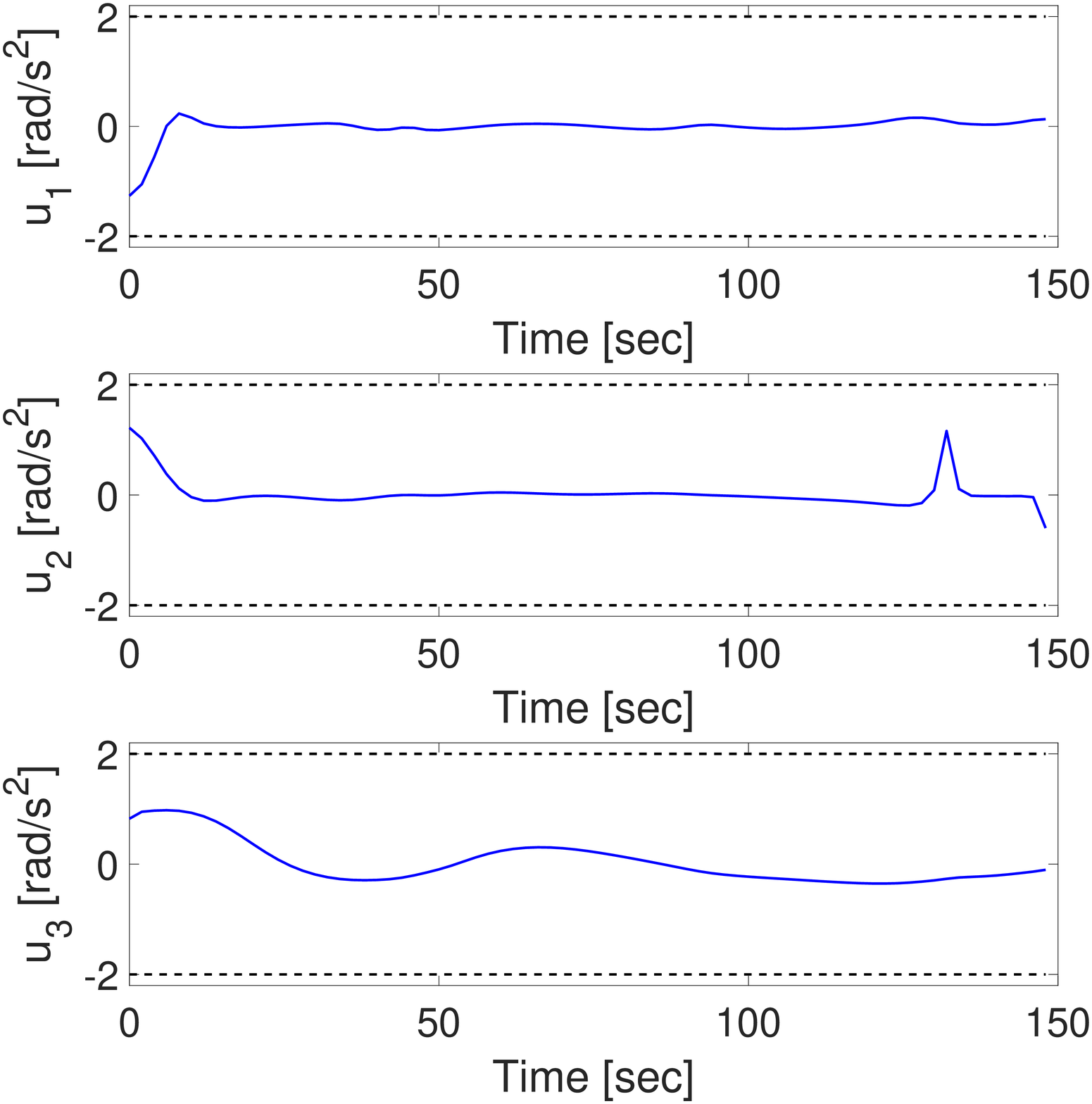,height=2.5in}}  
\put(  200,  185){\epsfig{file=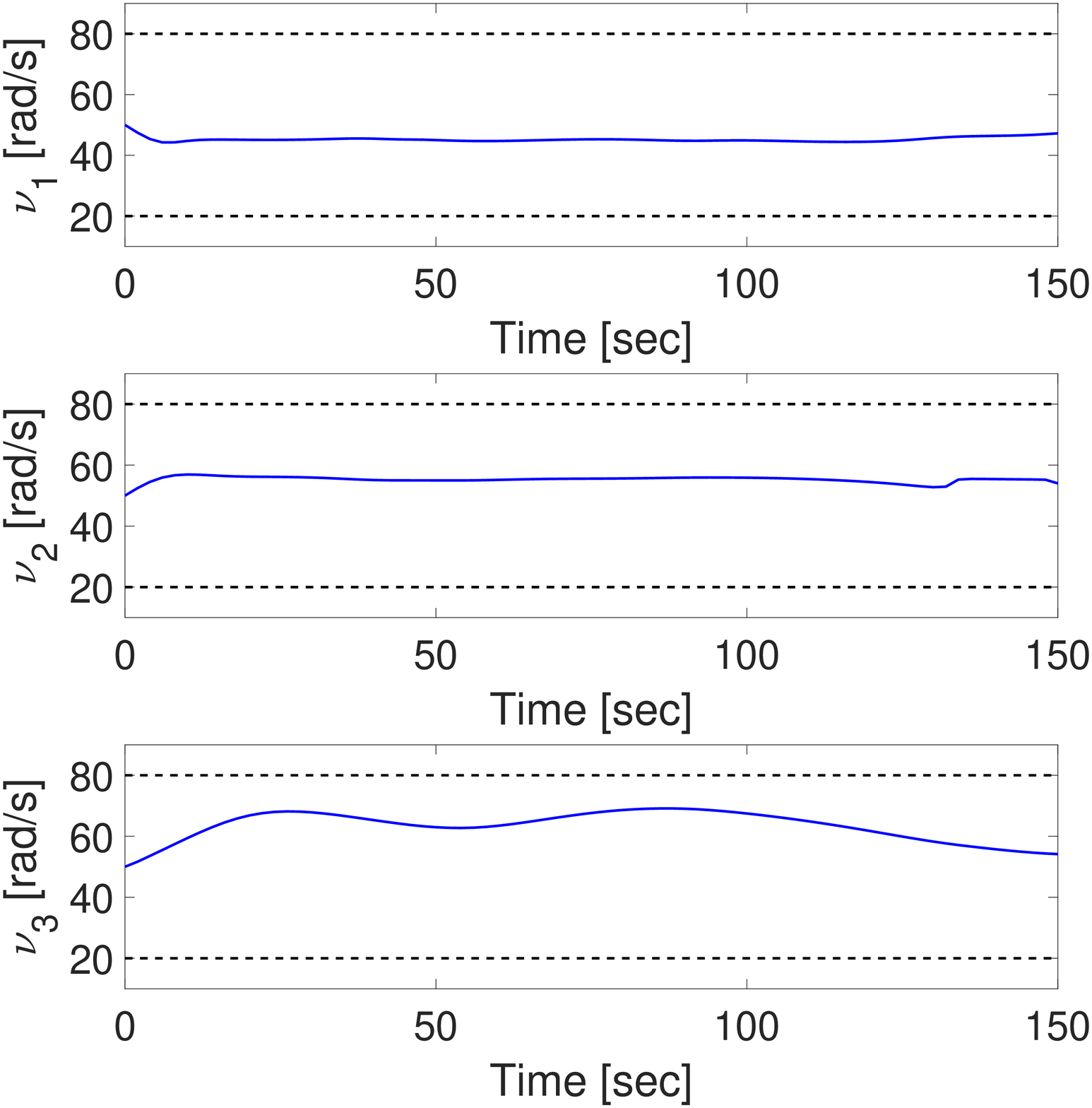,height=2.5in}}  
\put(  0,  0){\epsfig{file=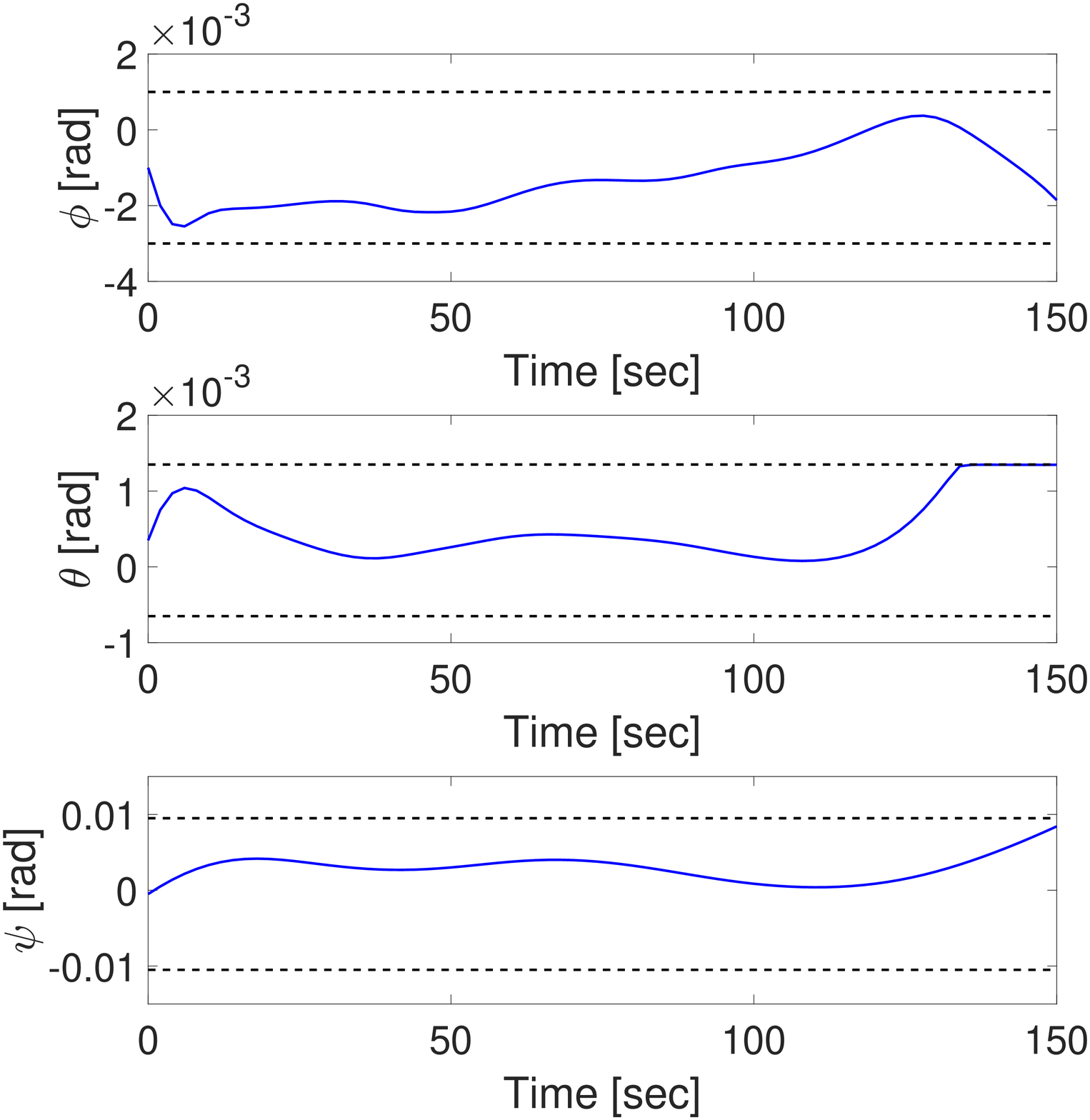,height=2.5in}}  
\put(  200, 0){\epsfig{file=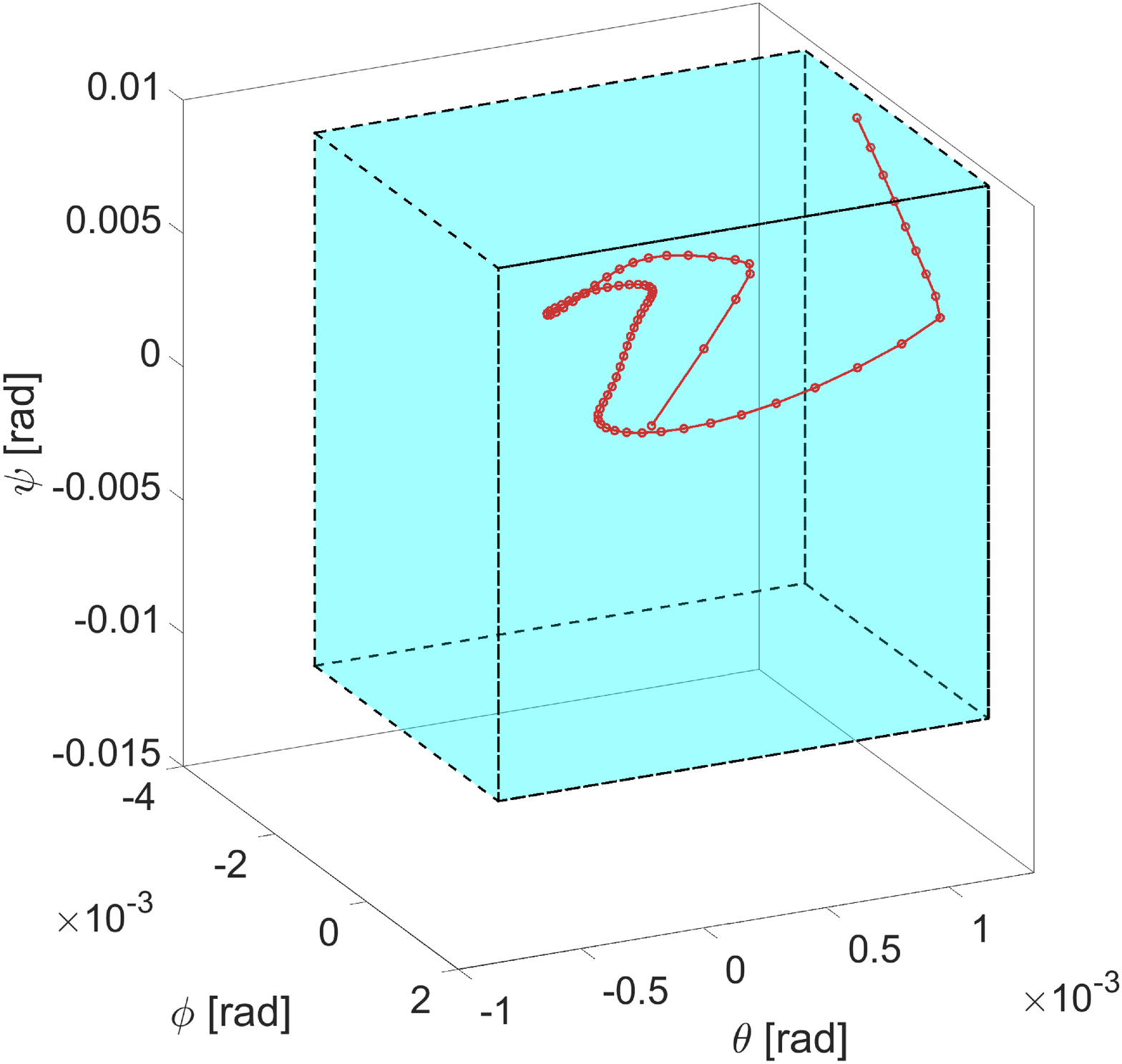,height=2.5in}}
\small
\put(180, 360){(a)}
\put(380, 360){(b)}
\put(180, 180){(c)}
\put(380, 180){(d)}
\normalsize
\end{picture}
\end{center}
      \caption{High precision pointing with three operational RWs (No constraint violation).}
      \label{fig:3RW_1}
      \vspace{-0.2in}
\end{figure}

The time histories of control variables, spinning accelerations of each reaction wheel are plotted in (a). The time histories of the reaction wheel spinning rates, and the Euler angles are presented in (b) and (c), respectively. A trajectory of Euler angles ($\phi, \theta, \psi$) is plotted as the red line in (d), where the blue cubic region represents the desired operating ranges for Euler angles. During the prediction horizon of $150\ [sec]$, our optimal control policy manages to keep all state variables within given ranges. Such results are expected since three RWs are sufficient to generate torque in an arbitrary direction to counteract the non-zero angular momentum of the spacecraft. 

In the next example, we present a simulation with same condition as the previous one except the initial spinning rate of the second reaction wheel $\bar{\nu}_{2, 0}$. The initial spinning rate vector of three reaction wheels are given as,
\begin{equation*}
    [\Bar{\nu}_{1, 0}, \Bar{\nu}_{2, 0}, \bar{\nu}_{3, 0}] = [50, 75.2, 50] \quad [rad/s]
\end{equation*}

The simulation results with modified initial condition are shown in Fig.~\ref{fig:3RW_2} below.
\begin{figure}[H]
\begin{center}
\begin{picture}(400, 370)
\put(  0,  185){\epsfig{file=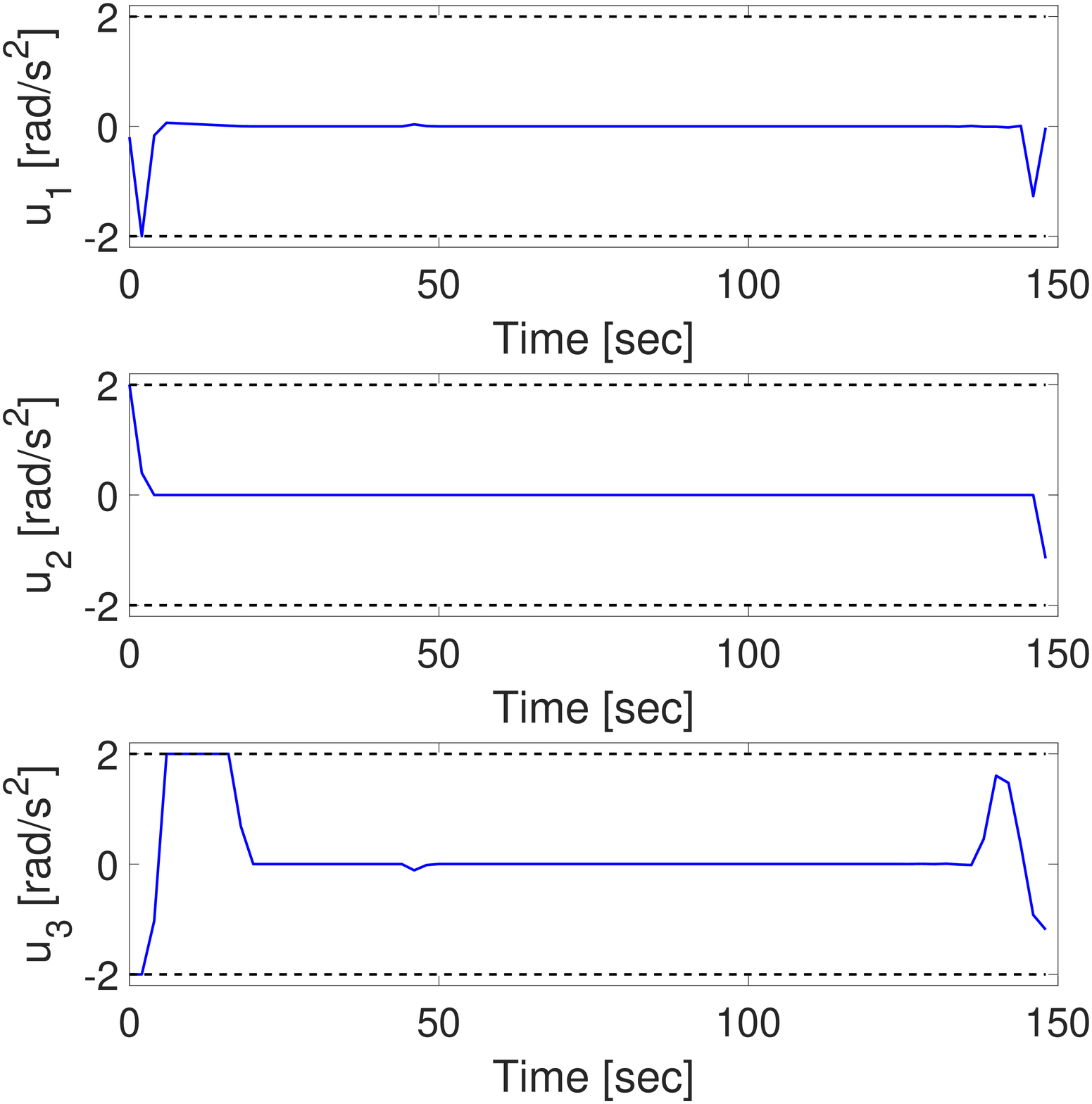,height=2.5in}}  
\put(  200,  185){\epsfig{file=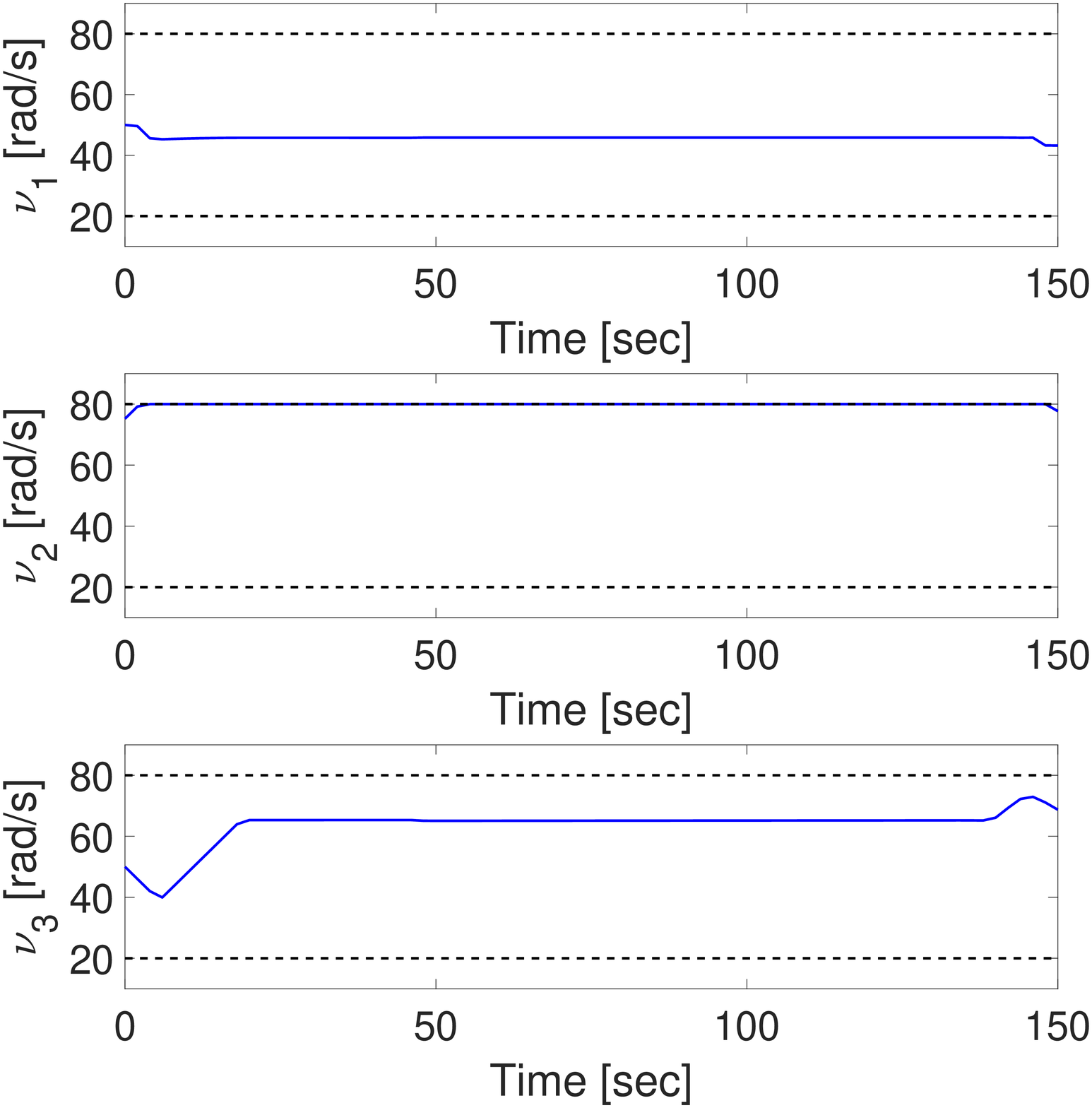,height=2.5in}}  
\put(  0,  0){\epsfig{file=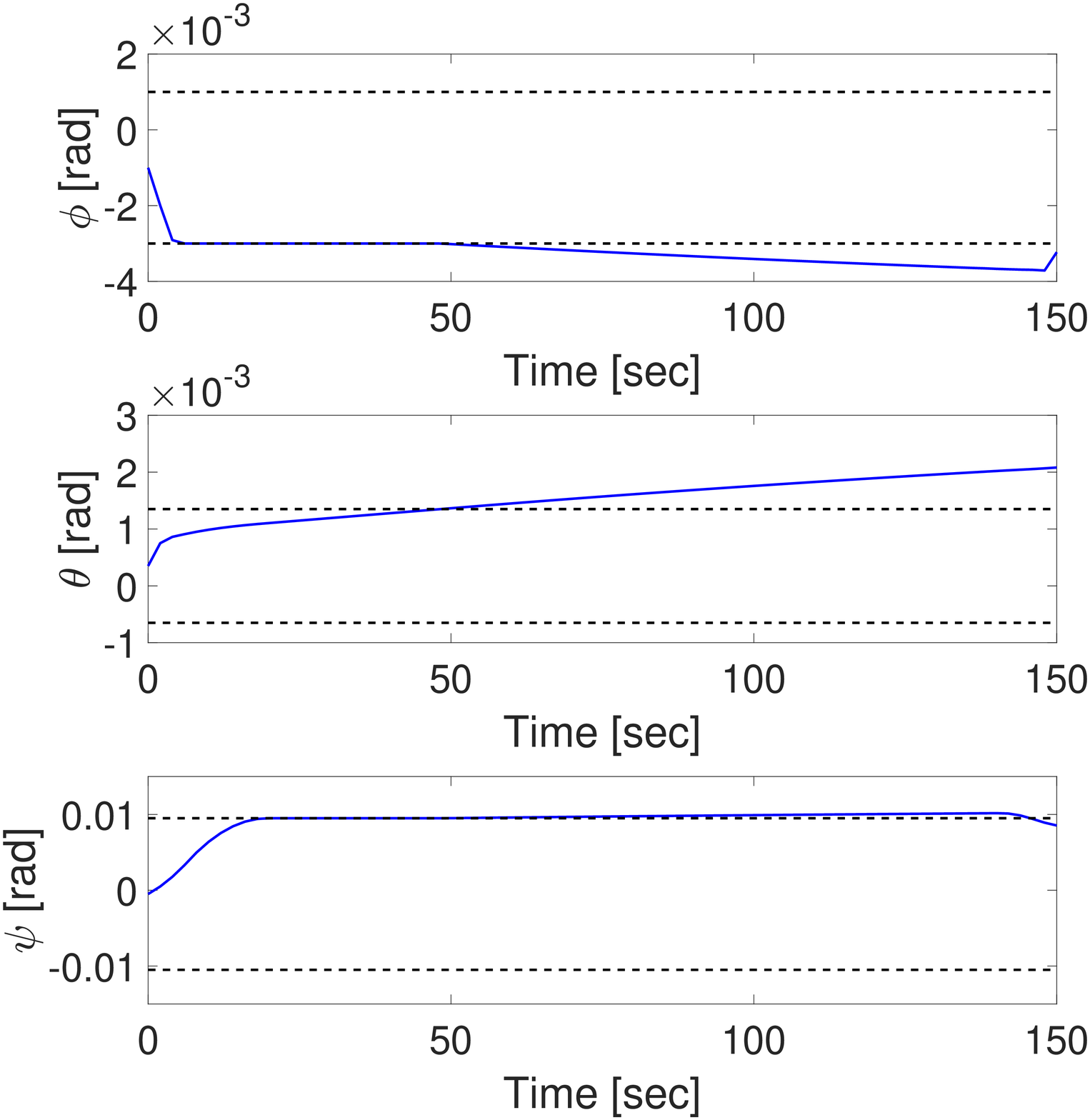,height=2.5in}}  
\put(  200, 0){\epsfig{file=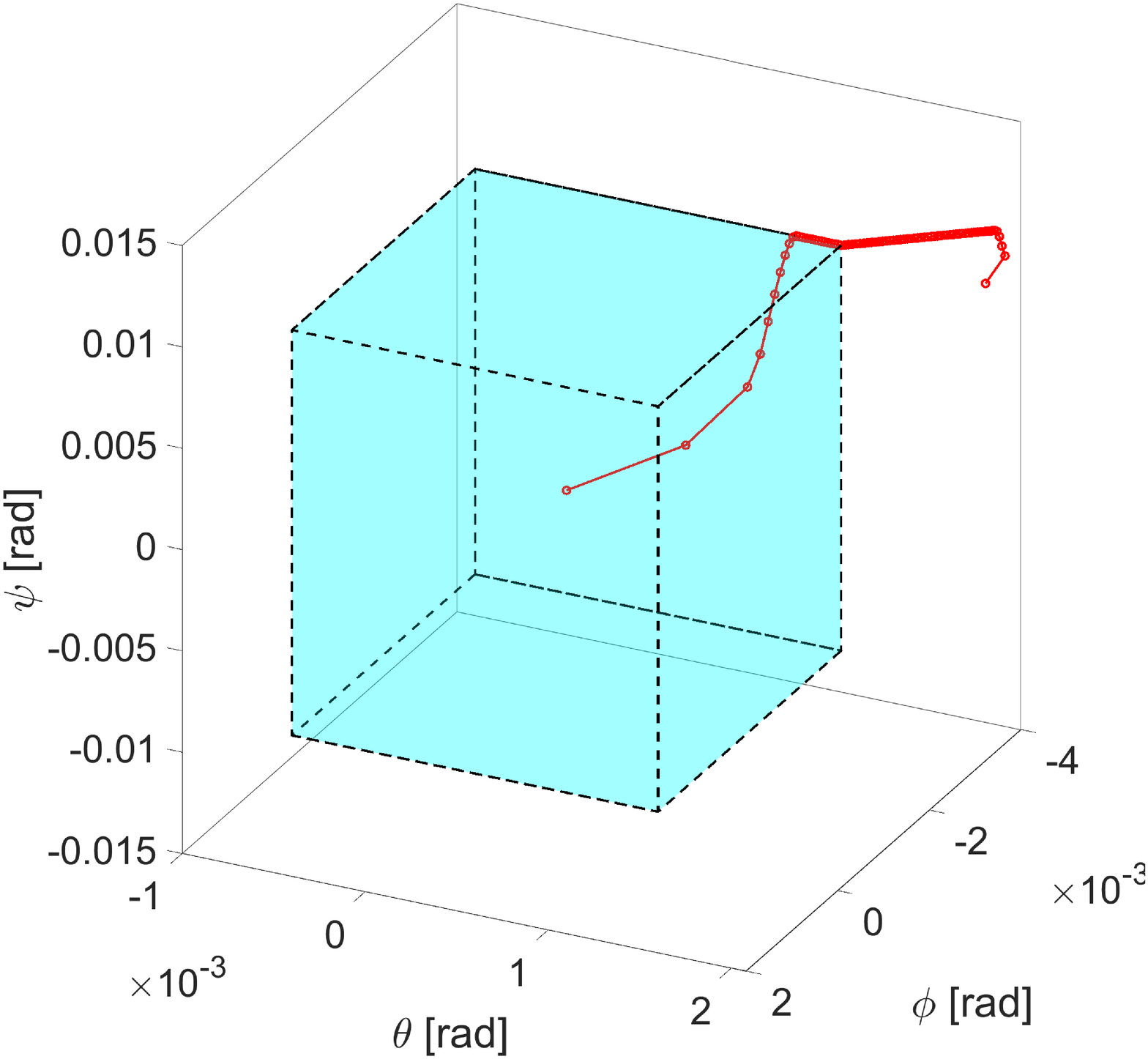,height=2.5in}}
\small
\put(180, 360){(a)}
\put(380, 360){(b)}
\put(180, 180){(c)}
\put(380, 180){(d)}
\normalsize
\end{picture}
\end{center}
      \caption{High precision pointing with three operational RWs (Constraint violation observed).}
      \label{fig:3RW_2}
      \vspace{-0.2in}
\end{figure}

The modified second RW initial spinning rate $\bar{\nu}_{2, 0}$ is close to its operating limit, 80 [$rad/s$]. With the modification, Euler angles cannot be kept within the specified range over the prediction horizon. Specifically, $\phi$ and $\theta$ first violate their constraints at $t = 48\ [sec]$, and $\psi$ exits its desired range shortly afterwards at $t = 52\ [sec]$ as shown in (c). It is observed in (b) that the second reaction wheel reaches its maximum spinning rate at the beginning of the simulation, which restricts the RWs to generate sufficient torque to counteract the angular momentum. Despite the inevitable constraint violation, the optimal control slows down the Euler angles from drifting out of the prescribed sets as shown in (c), for example, $\phi$ is maintained at its lower bound without exiting the desired region between $8\ [sec]$ and $48\ [sec]$.

\subsubsection{Two reaction wheels: $p = 2$}

Now we consider an underactuated spacecraft which only has two operational reaction wheels. We present two cases with different initial conditions, control admissible sets, reaction wheel spinning axis directions, and state constraints. In the first case, the unit vectors of RW spinning axis directions in the body-fixed frame $\mathcal{B}$ are 
\begin{equation*}
    W = [\Bar{g}_1, \Bar{g}_2] = \begin{bmatrix}
        1/\sqrt{3} & 1 \\ 1/\sqrt{3} & 0 \\ 1/\sqrt{3} & 0
    \end{bmatrix}^\top.
\end{equation*}

The state and control input constraints are defined as,
\begin{align*}
    -0.003 [rad] &\leq \phi \leq 0.002 [rad], \quad -0.0014 [rad] \leq \theta \leq 0.0026 [rad], \quad -0.02 [rad] \leq \psi \leq 0.02 [rad]\\
    20 [rad/s]&\leq ||\Bar{\nu}||_1 \leq 80 [rad/s], \quad 0 [rad/s^2]\leq ||u||_1 \leq 4 [rad/s^2]
\end{align*}

The initial condition for state variables are chosen as,
\begin{align*}
    [\phi_0, \theta_0, \psi_0] &= [-1 \times 10^{-3}, 6\times 10^{-4}, -5\times 10^{-4}] \quad [rad]\\
    [\Bar{\omega}_{1, 0}, \Bar{\omega}_{2, 0}, \Bar{\omega}_{3, 0}] &= [-5, 2, 3]\times 10^{-4} \quad [rad/s]\\
    [\Bar{\nu}_{1, 0}, \Bar{\nu}_{2, 0}] &= [50, 50] \quad [rad/s]
\end{align*}

The sampling period is $\Delta t = 2\ [sec]$, and the prediction horizon length is $N = 75$. Using `ipopt' solver and parameter $\theta = 1.1$, the numerical simulation results of our NLP approach to DCOC are plotted in Fig.~\ref{fig:2RW_1} below.
\begin{figure}[H]
\begin{center}
\begin{picture}(400, 360)
\put(  0,  180){\epsfig{file=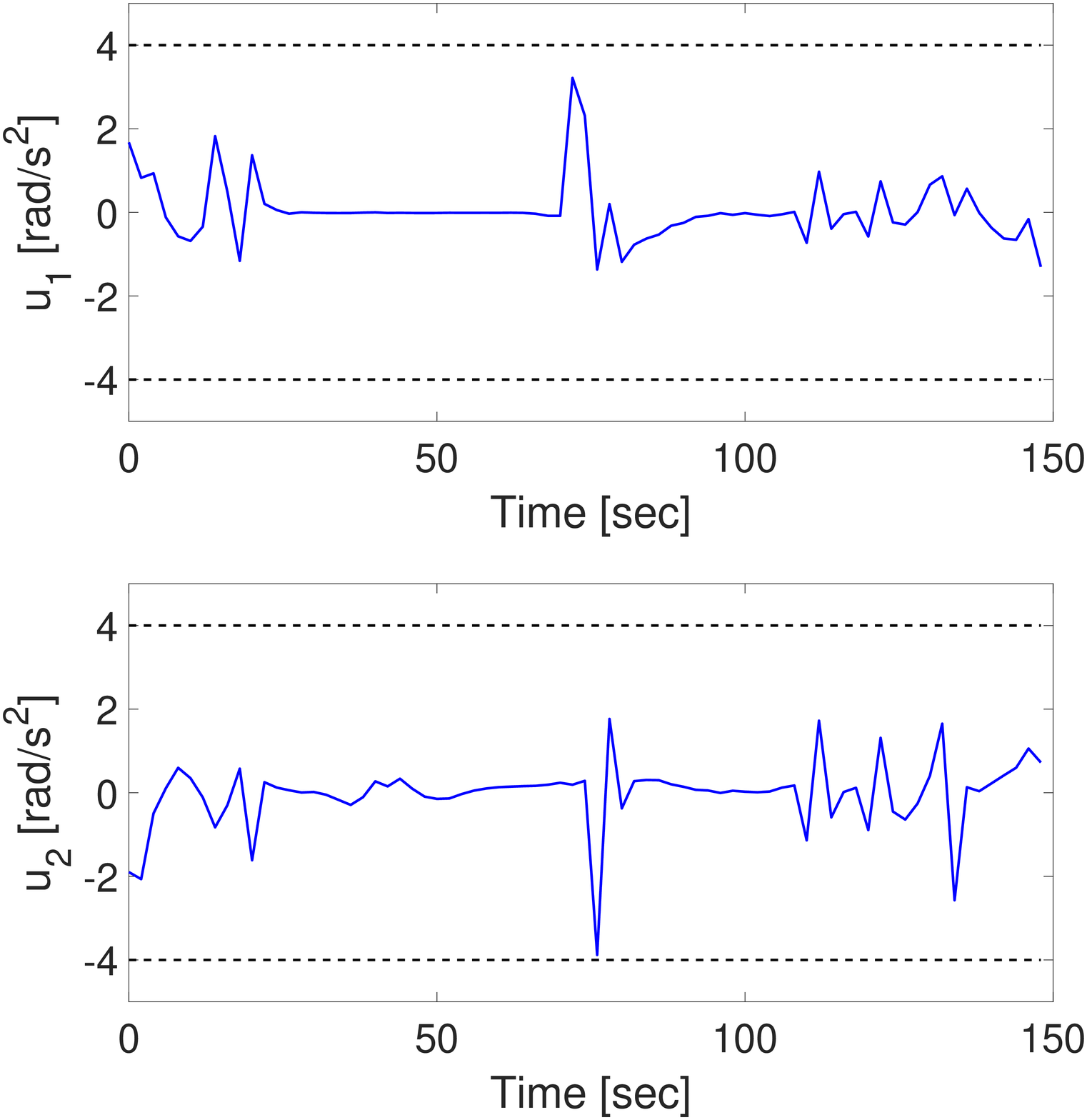,height=2.4in}}  
\put(  200,  180){\epsfig{file=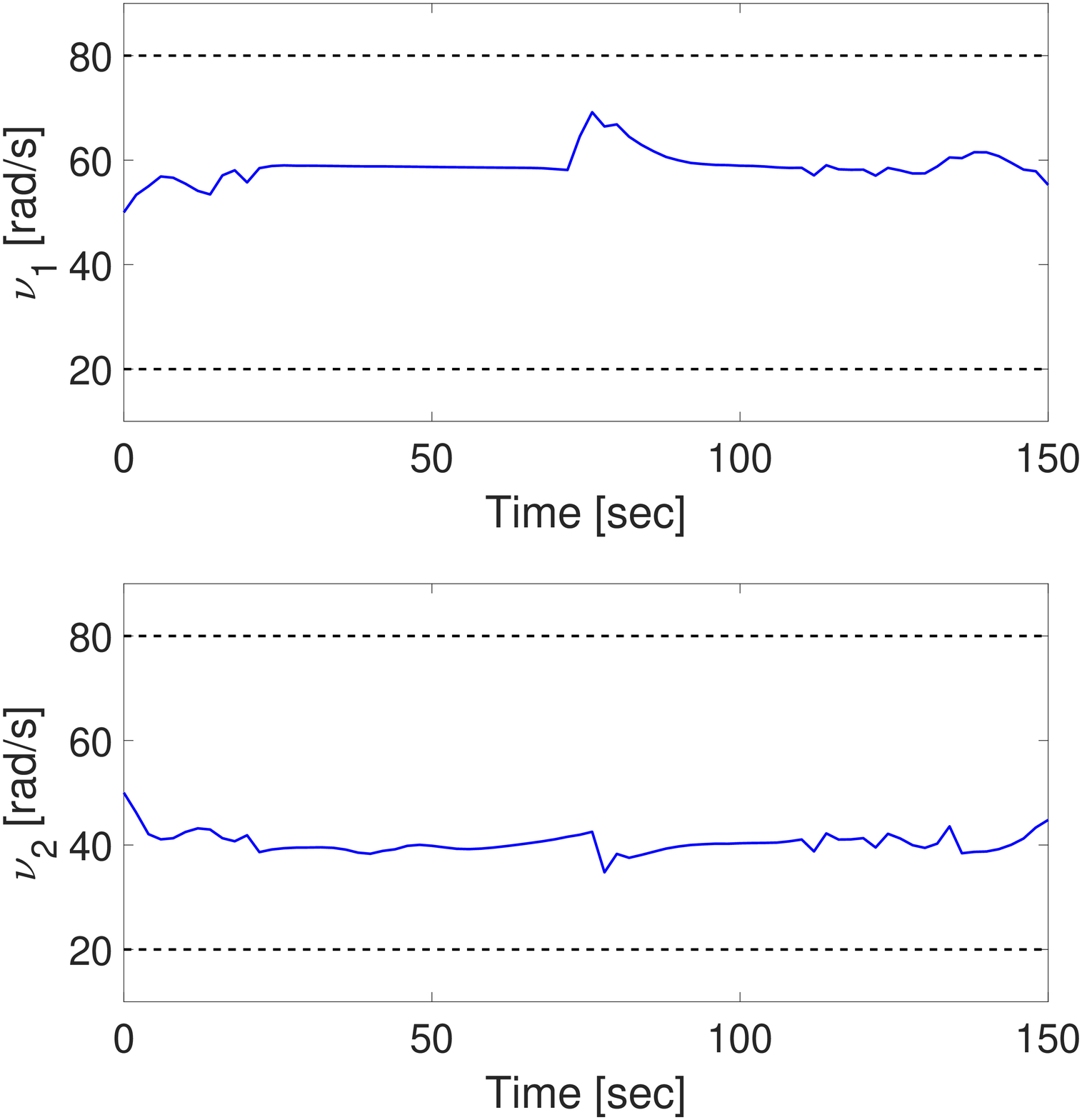,height=2.4in}}  
\put(  0,  0){\epsfig{file=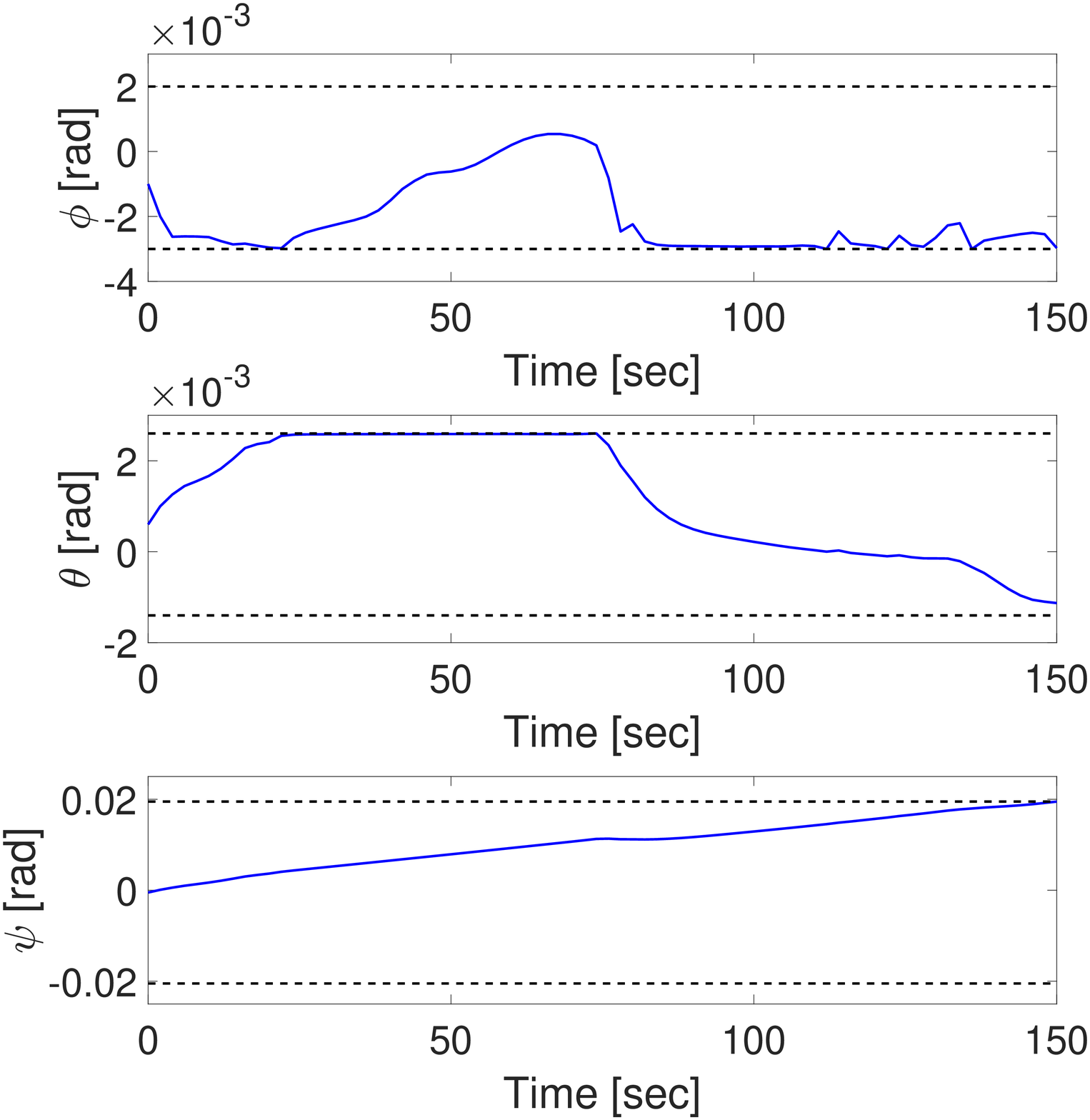,height=2.4in}}  
\put(  200, 0){\epsfig{file=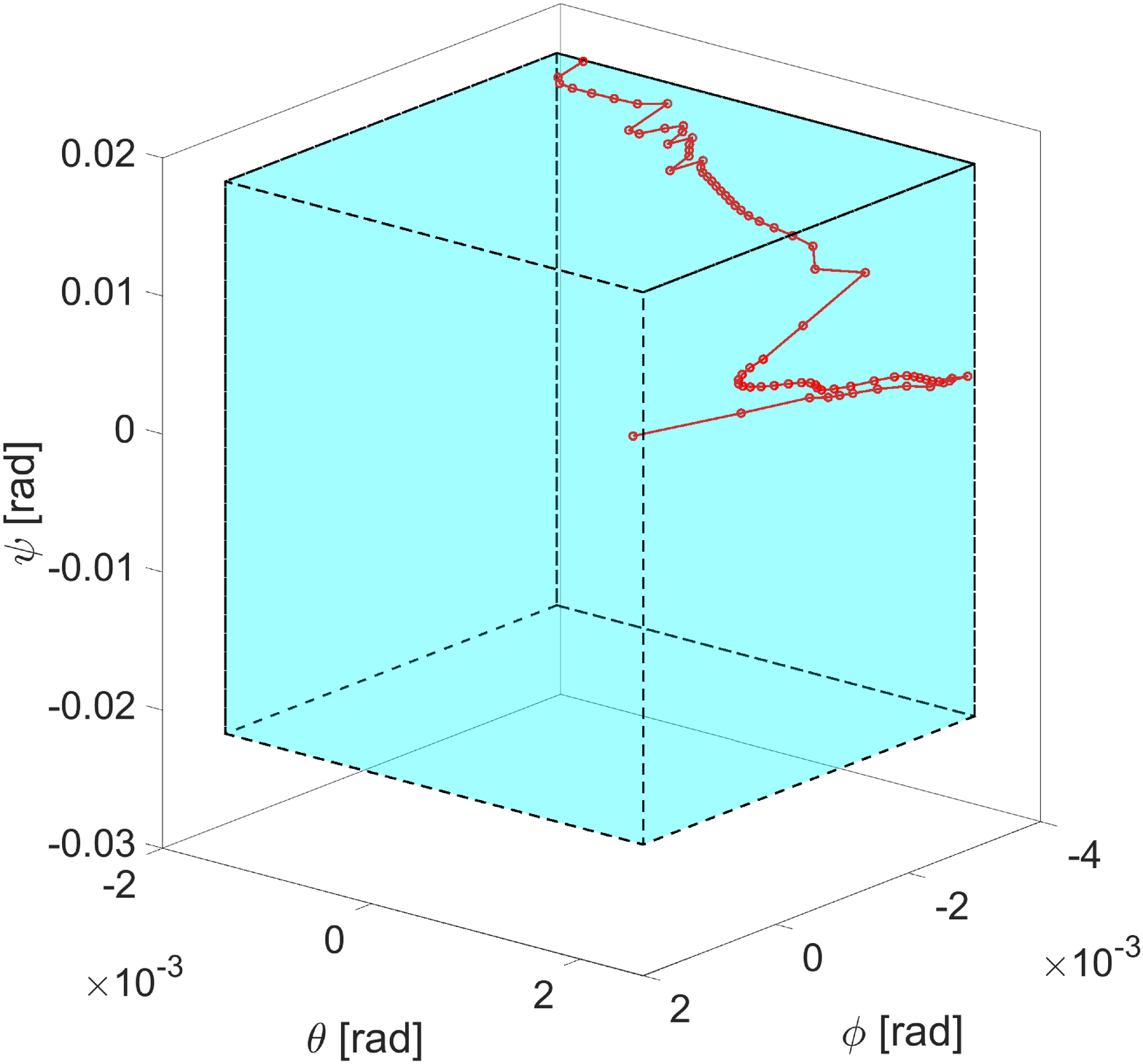,height=2.4in}}
\small
\put(180, 340){(a)}
\put(380, 340){(b)}
\put(180, 165){(c)}
\put(380, 165){(d)}
\normalsize
\end{picture}
\end{center}
      \caption{High precision pointing with two operational RWs (No constraint violation).}
      \label{fig:2RW_1}
      \vspace{-0.1in}
\end{figure}

Despite the spacecraft is underactuated, in this example, DCOC still manages to maintain all state variables within their desired range over the prediction horizon. Although two reaction wheels can only generate torque on a 2-D plane in the body-fixed frame, it appears that there is some coordination between the two reaction wheels that allows them to counteract angular momentum in all three axes. Specifically, spikes are observed in both RW accelerations (as shown in (a)) around $t = 70\ [sec]$. At the same time instant, it is observed in (c) that $\psi$ is maintained at current value for about $15\ [sec]$ before it keeps growing, which avoids constraint violation over the prediction horizon.

In the second case, the unit vectors of the RW spinning axis directions resolved in the body-fixed frame $\mathcal{B}$ are
\begin{equation*}
    W = [\Bar{g}_1, \Bar{g}_2] = \begin{bmatrix}
        1/\sqrt{3} & 0 \\ 1/\sqrt{3} & 1 \\ 1/\sqrt{3} & 0
    \end{bmatrix}^\top.
\end{equation*}

The following constraints for state and control input variables are considered,
\begin{align*}
    0.09 [rad] &\leq \phi \leq 1.01 [rad], \quad -0.02 [rad] \leq \theta \leq 0.02 [rad], \quad -0.05 [rad] \leq \psi \leq 0.05 [rad]\\
    20 [rad/s]&\leq ||\Bar{\nu}||_1 \leq 100 [rad/s], \quad 0 [rad/s^2]\leq ||u||_1 \leq 1 [rad/s^2]
\end{align*}

The initial condition for state variables are chosen as 
\begin{align*}
    [\phi_0, \theta_0, \psi_0] &= [1, 3\times 10^{-4}, -0.01] \quad [rad]\\
    [\Bar{\omega}_{1, 0}, \Bar{\omega}_{2, 0}, \Bar{\omega}_{3, 0}] &= [4, 4, -50]\times 10^{-5} \quad [rad/s]\\
    [\Bar{\nu}_{1, 0}, \Bar{\nu}_{2, 0}] &= [80, 20] \quad [rad/s]
\end{align*}

The sampling period is $\Delta t = 2\ [sec]$, and the prediction horizon length is $N = 75$. Using `ipopt' solver and parameter $\theta = 1.1$, the numerical simulation results of our NLP approach to DCOC are plotted in Fig.~\ref{fig:2RW_2} below.
\begin{figure}[H]
\begin{center}
\begin{picture}(400, 370)
\put(  0,  185){\epsfig{file=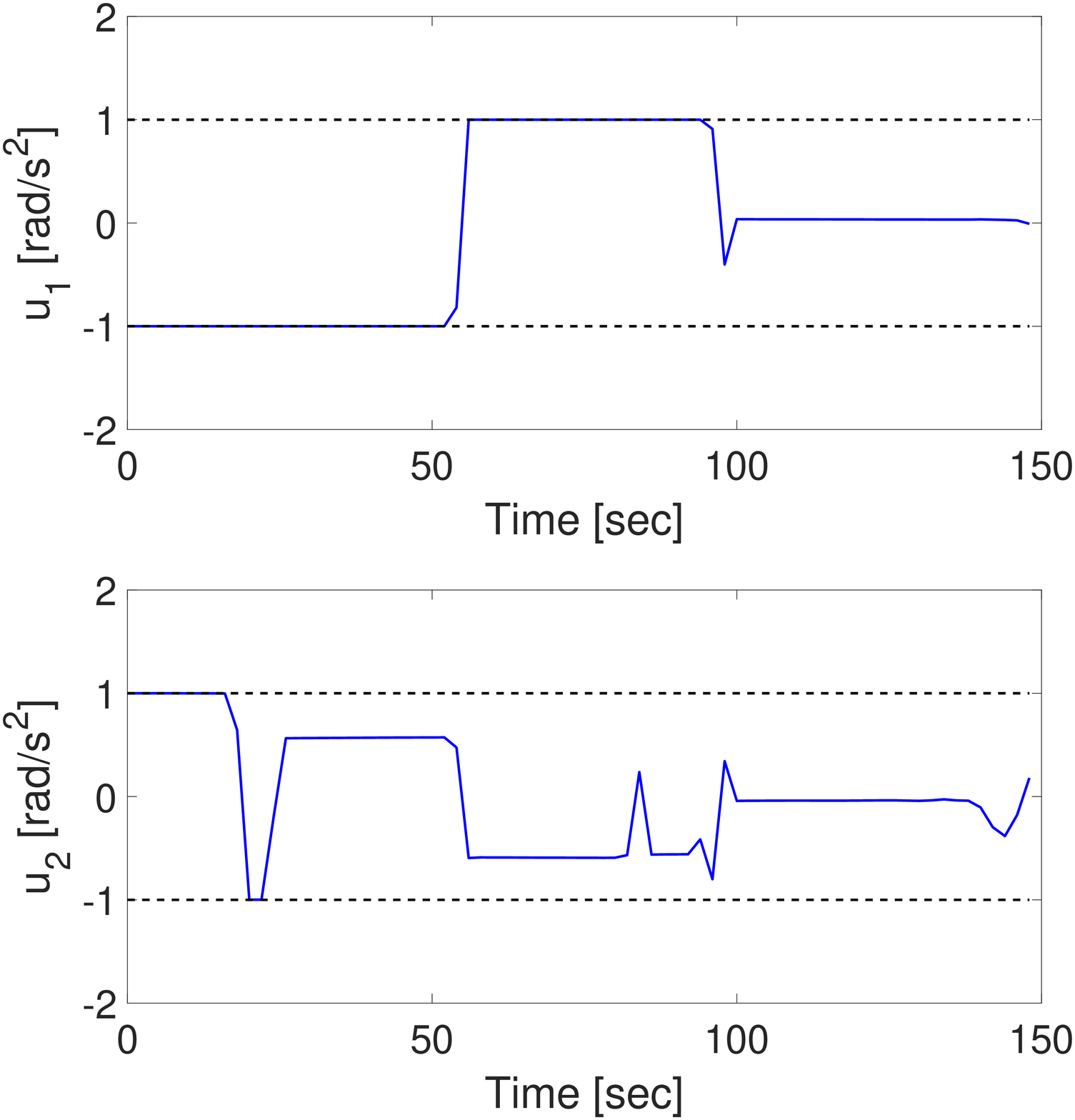,height=2.4in}}  
\put(  200,  185){\epsfig{file=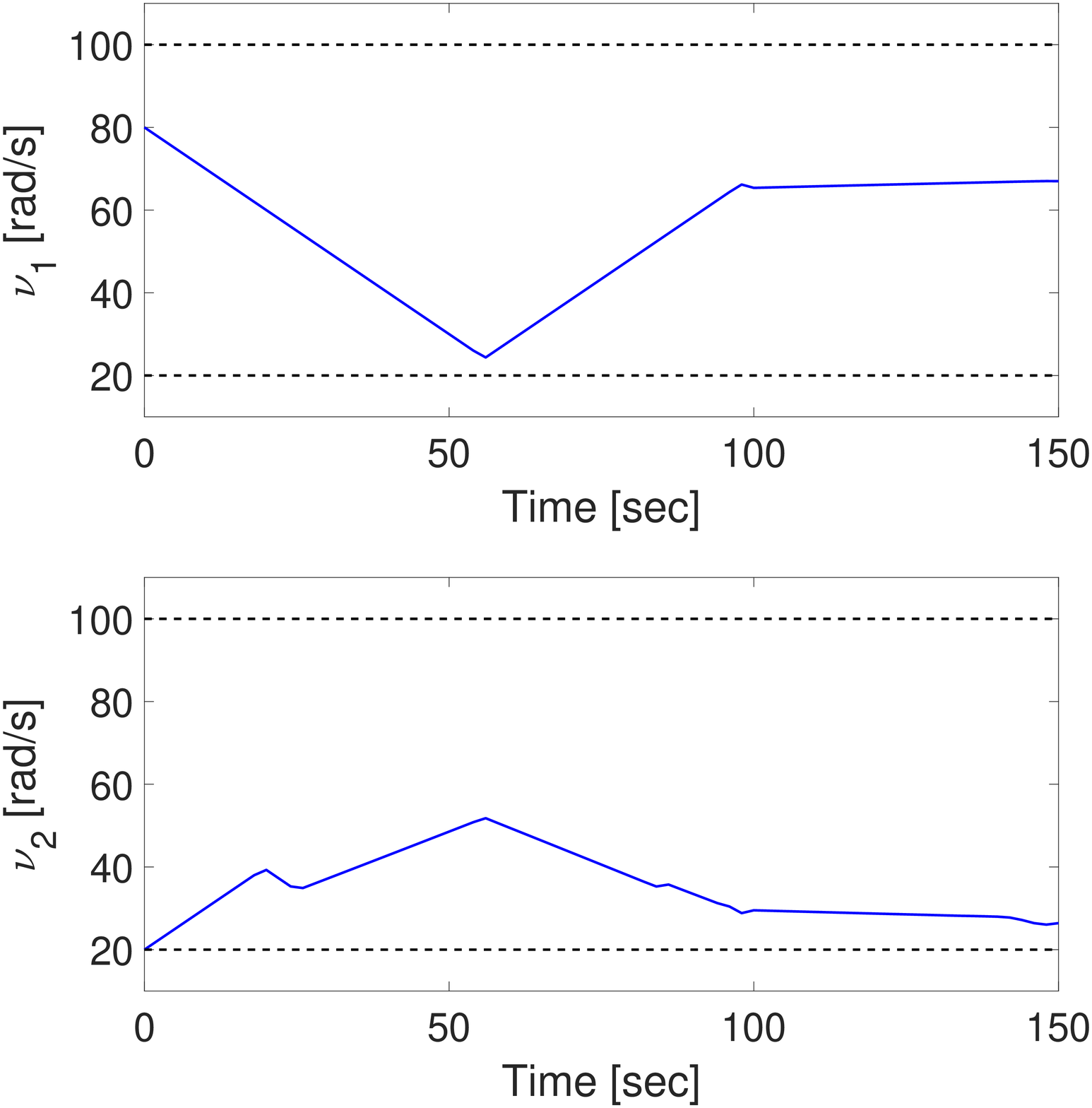,height=2.4in}}  
\put(  0,  0){\epsfig{file=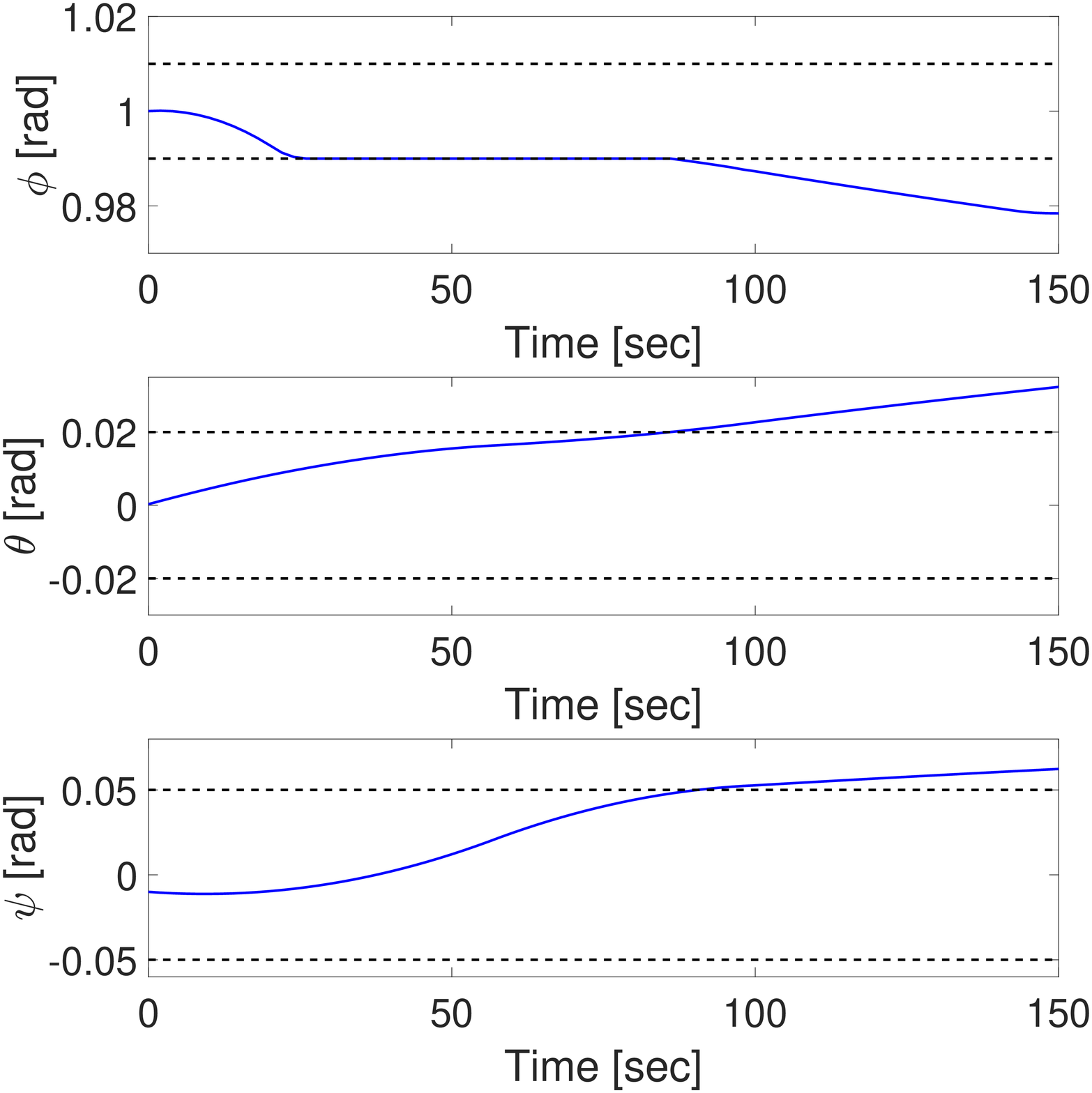,height=2.4in}}  
\put(  200, 0){\epsfig{file=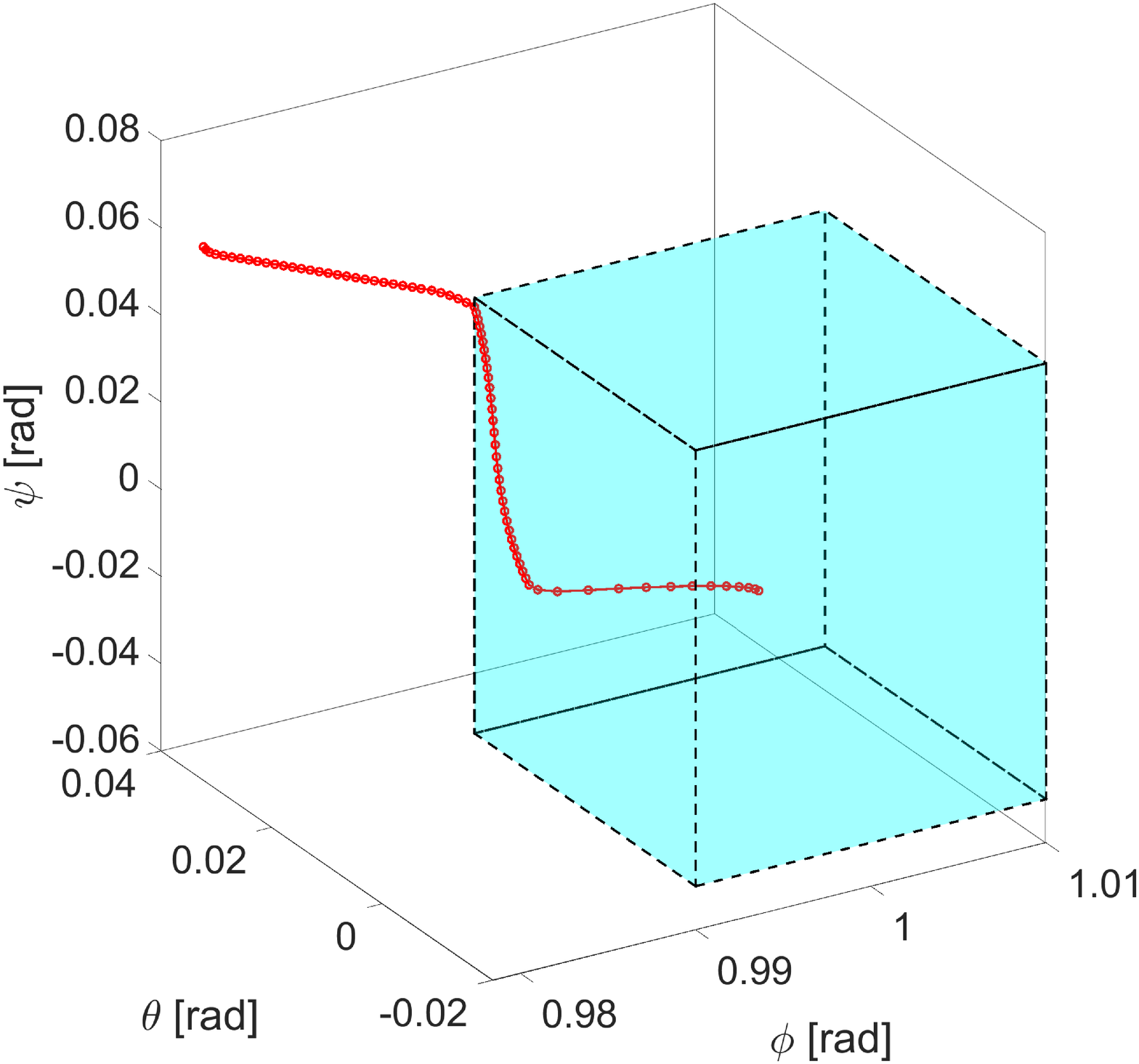,height=2.4in}}
\small
\put(180, 360){(a)}
\put(380, 360){(b)}
\put(180, 180){(c)}
\put(380, 180){(d)}
\normalsize
\end{picture}
\end{center}
      \caption{High precision pointing with two operational RWs (Constraint violation observed).}
      \label{fig:2RW_2}
      \vspace{-0.2in}
\end{figure}

In this example, the control authority is further restricted with smaller admissible RW accelerations. As a result, constraint violations occur at $t = 84\ [sec]$ in the pitch angle $\theta$. Shortly afterwards, at $t = 88\ [sec]$, constraints on raw and yaw angles, $\phi$, $\psi$ are both violated. Although the optimal control cannot prevent Euler angles from drifting out of their desired regions over the prediction horizon, it is observed in (a) that $\phi$ is kept constant at its lower bound between $24$ and $88\ [sec]$, hence postpones the \textit{time-before-exit}, which is expected from a DCOC solution.

\section{Conclusions}\label{sec:5}

This paper presented a continuous optimization approach to DCOC and its application to spacecraft high-precision attitude control. The approach computes a control input sequence that maximizes the \textit{time-before-exit} by solving an NLP problem with an exponentially weighted cost function and purely continuous variables. Based on results from sensitivity analysis and exact penalty method, we proved the optimality guarantee of our approach. The practical application of our approach was demonstrated through a spacecraft high-precision attitude control example. A nominal case with  three functional RWs and an underactuated case with only two functional RWs were considered. Simulation results illustrated the effectiveness of our approach as a contingency method for extending spacecraft's effective mission time in the case of RW failures.

\section*{Funding Sources}

This research was supported by the National Science Foundation award ECCS 1931738 and the Air Force Office of Scientific Research grant FA9550-20-1-038.


\bibliography{ref}

\end{document}